\documentclass[11pt]{article}
\usepackage{amsmath,amsfonts,amssymb,amsthm}
\usepackage{enumerate}
\usepackage{caption}
\usepackage{pbox}

\numberwithin{equation}{section}
\theoremstyle{plain}
\newtheorem{theorem}[equation]{Theorem}
\newtheorem{corollary}[equation]{Corollary}

\newtheorem{proposition}[equation]{Proposition}

\usepackage{geometry}
\geometry{tmargin=3cm,lmargin=3.8cm,rmargin=3.8cm}

\theoremstyle{definition}
\newtheorem{definition}[equation]{Definition}

\newtheorem{example}[equation]{Example}
\newtheorem{remark}[equation]{Remark}

\numberwithin{equation}{section}

\newcommand{\R}{{\mathbb R}}
\newcommand{\N}{{\mathbb N}}

\newcommand{\Om}{\Omega}

\providecommand{\vint}[1]{\mathchoice
          {\mathop{\vrule width 5pt height 3 pt depth -2.5pt
                  \kern -9pt \kern 1pt\intop}\nolimits_{\kern -5pt{#1}}}
          {\mathop{\vrule width 5pt height 3 pt depth -2.6pt
                  \kern -6pt \intop}\nolimits_{\kern -3pt{#1}}}
          {\mathop{\vrule width 5pt height 3 pt depth -2.6pt
                  \kern -6pt \intop}\nolimits_{\kern -3pt{#1}}}
          {\mathop{\vrule width 5pt height 3 pt depth -2.6pt
                  \kern -6pt \intop}\nolimits_{\kern -3pt{#1}}}}

\newcommand{\loc}{\mathrm{loc}}

\newcommand{\BV}{\mathrm{BV}}

\newcommand{\liploc}{\mathrm{Lip}_{\mathrm{loc}}}

\newcommand{\ch}{\text{\raise 1.3pt \hbox{$\chi$}\kern-0.2pt}}

\DeclareMathOperator{\capa}{Cap}
\DeclareMathOperator{\rcapa}{cap}

\DeclareMathOperator{\dist}{dist}

\DeclareMathOperator{\Lip}{Lip}

\DeclareMathOperator{\fint}{fine-int}

\begin{document}
\title{Federer's characterization of  \\
sets of finite
perimeter in metric spaces
\footnote{{\bf 2010 Mathematics Subject Classification}: 30L99, 31E05, 26B30.
\hfill \break {\it Keywords\,}: metric measure space,
set of finite perimeter, Federer's characterization, measure-theoretic boundary,
codimension one Hausdorff measure, fine topology.
}}
\author{Panu Lahti}
\maketitle

\begin{abstract}
Federer's characterization of sets of finite perimeter states
(in Euclidean spaces)
that a set is of finite perimeter if and only if the measure-theoretic boundary of the set has finite Hausdorff measure
of codimension one.
In complete metric spaces
that are equipped with a doubling measure
and support a Poincar\'e inequality, the ``only if''
direction was shown by Ambrosio (2002).
By applying fine potential theory in the case $p=1$,
we prove that the ``if'' direction holds as well.
\end{abstract}

\section{Introduction}

In the past two decades, there has been great interest in studying problems
of first-order analysis in the setting of general metric measure spaces,
see e.g. \cite{A1,AMP,BB,HK,M,S}. In particular, Sobolev functions
(sometimes called Newton-Sobolev functions in the metric setting)
and functions of bounded variation ($\BV$ functions) have been topics
of central interest. In much of the literature (as well as in the current paper) one assumes that the space is complete,
equipped with a doubling measure, and supports a Poincar\'e inequality;
see Section \ref{sec:preliminaries} for definitions.
Studying questions in such an abstract setting provides an opportunity to unify the
theories developed in specific settings such as weighted Euclidean spaces,
Riemannian manifolds, Carnot groups, etc.
Moreover, without having the Euclidean structure available, one is forced
to develop novel methods and proofs, giving new insight into various problems.

In the theory of $\BV$ functions in the Euclidean setting,
a key result originally due to De Giorgi states that
if $E$ is a set of finite perimeter, then the perimeter measure $P(E,\cdot)$
coincides with the $n-1$-dimensional Hausdorff measure restricted to the
measure-theoretic boundary $\partial^*E$. In particular, $P(E,\R^n)<\infty$
implies $\mathcal H^{n-1}(\partial^*E)<\infty$.
By a deep result due to Federer \cite[Section 4.5.11]{Fed},
the converse holds as well,
so in fact $P(E,\R^n)<\infty$ if and only if
$\mathcal H^{n-1}(\partial^*E)<\infty$.
This is known as Federer's characterization of sets of finite perimeter.
In the metric setting, where it is natural to formulate this kind of result
by means of the \emph{codimension one} Hausdorff measure $\mathcal H$,
 the ``only if'' direction of the characterization was
 shown by Ambrosio \cite{A1}, but the ``if'' direction has remained 
 an open problem.
In the current paper, we show that this direction holds as well.

\begin{theorem}\label{thm:Federers characterization}
Let $\Om\subset X$ be an open set, let $E\subset X$ be a $\mu$-measurable set, and
suppose that $\mathcal H(\partial^*E\cap \Om)<\infty$. Then $P(E,\Om)<\infty$.
\end{theorem}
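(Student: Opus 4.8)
The plan is to establish the quantitative estimate $P(E,\Om)\le C\,\mathcal H(\partial^*E\cap\Om)$ for a structural constant $C$; this of course gives the theorem. I would begin with routine reductions. Exhausting $\Om$ by bounded open sets and using that both sides are monotone in $\Om$, one may assume $\Om$ is bounded. A set of finite codimension one Hausdorff measure is $\mu$-negligible — covering it by balls $B(x_i,r_i)$ with $r_i<\delta$ and $\sum_i\mu(B(x_i,r_i))/r_i$ bounded gives $\mu(\,\cdot\,)\le\delta\sum_i\mu(B(x_i,r_i))/r_i\to0$ — so $\mu(\partial^*E\cap\Om)=0$. By Lebesgue differentiation, $\mu$-a.e.\ point of $\Om$ then has density $0$ or $1$ for $E$ and $\mu(E\triangle I_E)=0$, where $I_E$ denotes the set of density-$1$ points; replacing $E$ by $I_E$ (which alters neither side) we obtain a partition $\Om=I_E\sqcup O_E\sqcup(\partial^*E\cap\Om)$, where $O_E$ is the density-$0$ set and the last piece is $\mu$-null and of finite $\mathcal H$-measure.

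Next I would call on $1$-fine potential theory to isolate the regular part of $E$. By the Kellogg-type property for the case $p=1$, the sets $I_E\setminus\fint(I_E)$ and $O_E\setminus\fint(O_E)$ have zero $1$-capacity, hence zero codimension one Hausdorff measure, since $\capa_1(\,\cdot\,)=0$ if and only if $\mathcal H(\,\cdot\,)=0$ in the present setting. Put $V:=\fint(I_E)$ and $W:=\fint(O_E)$: these are disjoint $1$-finely open sets, $\mu(E\triangle V)=0$, and $S:=\Om\setminus(V\cup W)=(\partial^*E\cap\Om)\cup(I_E\setminus V)\cup(O_E\setminus W)$ satisfies $\mathcal H(S)=\mathcal H(\partial^*E\cap\Om)<\infty$. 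Because $V$ and $W$ are $1$-finely open and partition $\Om\setminus S$, and because the $1$-fine topology is connected on connected open sets, $\chi_E$ is ``finely locally constant'' off $S$: for any open $U\subseteq\Om\setminus S$, the trace $V\cap U$ is $1$-finely clopen in each component of $U$ and hence is a union of such components, so $\chi_E$ is genuinely locally constant on $U$ and $P(E,U)=0$. It thus remains to show $P(V,\Om)\le C\,\mathcal H(S)$.

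To this end, fix $k\in\N$, cover $S$ by balls $B(x_i,r_i)$ with $r_i<1/k$ and $\sum_i\mu(B(x_i,r_i))/r_i\le\mathcal H(S)+1/k$, and form an associated Lipschitz cutoff $\psi_k$ (a standard discrete convolution) with $\psi_k=1$ on $S$, $\supp\psi_k\subseteq\bigcup_iB(x_i,2r_i)$, $\mu(\{\psi_k>0\})\to0$, and $\|D\psi_k\|(X)\le C(\mathcal H(S)+1/k)$. By the coarea formula and the coarea inequality one may pick a level $t_k\in(0,1)$ for which $G_k:=\{\psi_k>t_k\}$ is an open neighborhood of $S$ with $\mu(G_k)\to0$, $\mu(\partial G_k)=0$, and both $P(G_k,X)$ and $\mathcal H(\partial G_k)$ are at most $C(\mathcal H(S)+1/k)$. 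On the open set $\Om\setminus\overline{G_k}$ — which is disjoint from $S$ — the function $\chi_E$ is locally constant by the previous paragraph; gluing this locally constant function to the value $1$ on $G_k$ and interpolating across $\partial G_k$ through the level sets of $\psi_k$ produces a function $u_k\in\BV(\Om)$ with $u_k\to\chi_E$ in $L^1(\Om)$ and, crucially, $\|Du_k\|(\Om)\le C(\mathcal H(S)+1/k)$: the entire variation of $u_k$ sits in a thin layer between two nearby level sets of $\psi_k$, and so is bounded, via the coarea formula, by a quantity tending to $P(G_k,X)$. Lower semicontinuity of the total variation under $L^1$-convergence then yields $P(E,\Om)=P(V,\Om)\le\liminf_k\|Du_k\|(\Om)\le C\,\mathcal H(S)=C\,\mathcal H(\partial^*E\cap\Om)<\infty$.

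I expect the heart of the matter — and the main obstacle — to be precisely this last construction. Since $E$ is a priori known only to be locally constant off $S$ in the fine-topological sense above, one must build the approximants $u_k$, and bound their variation by $\mathcal H(S)$, without ever invoking finite perimeter of $E$ (or of any enlargement of it), which would be circular. The concrete difficulty is to control the variation of $u_k$ at the ``seam'' where the finely-locally-constant set $V$ meets the metric neighborhood $G_k$ of $S$; handling this requires the full strength of $p=1$ fine potential theory — quantitative thinness via the Cartan property, together with boxing-type estimates for the variational $1$-capacity — so that the seam contribution can be absorbed into $\mathcal H(\partial G_k\cap\Om)\lesssim\mathcal H(S)$.
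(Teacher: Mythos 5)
Your argument has a fatal gap at its second step: the ``Kellogg-type property'' you invoke, namely that $I_E\setminus\fint I_E$ and $O_E\setminus\fint O_E$ always have zero $1$-capacity, is not a known result and is in fact false. Genuine Kellogg/Choquet-type statements control the set of points of a set $A$ at which $A$ \emph{itself} is thin; your claim instead concerns points of $I_E$ (resp.\ $O_E$) at which the \emph{complement} fails to be thin, and these sets can be large. The paper's own Example 5.2 is a counterexample: for $E=\bigcup_j B(q_j,2^{-j})\subset\R$ with $\{q_j\}$ the rationals, every nonempty set is non-thin at every point of $\R$ (a point has relative $1$-capacity comparable to $1$ in $\R$), so $\fint O_E=\emptyset$ and $O_E\setminus\fint O_E=O_E$ has infinite measure, hence infinite $\capa_1$ and infinite $\mathcal H$-measure. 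Moreover, since $(I_E\setminus\fint I_E)\cup(O_E\setminus\fint O_E)=\partial^1 I_E\setminus\partial^*E$ (using $\partial^1 I_E=\partial^1 O_E$), your claim, restricted to $\Om$ and weakened to $\mathcal H$-negligibility, is exactly the statement $\mathcal H((\partial^1 I_E\setminus\partial^*E)\cap\Om)=0$ under the hypothesis $\mathcal H(\partial^*E\cap\Om)<\infty$ --- which is precisely the crux of the paper (Theorem \ref{thm:fine boundary when meas th boundary is finite}, proved via the new result $\overline{\partial^*E}^1=\partial^1 I_E$, the weak Cartan property, and boxing-type capacity estimates). So the heart of the theorem is being assumed, not proved.

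The later part of your plan has problems as well, even granting that $S:=\Om\setminus(\fint I_E\cup\fint O_E)$ has finite $\mathcal H$-measure. What then remains is the implication $\mathcal H(\partial^1 I_E\cap\Om)<\infty\Rightarrow P(E,\Om)<\infty$, which is Theorem \ref{thm:Fed style characterization} (from \cite{L-Fed}) and is itself a substantial result; your direct construction does not reprove it. The step ``$V\cap U$ is $1$-finely clopen in each component of $U$, hence $\ch_E$ is locally constant on $U$'' relies on fine connectedness of connected open sets, which you neither prove nor cite for $p=1$ in metric spaces; and the decisive estimate --- bounding the variation of the glued function $u_k$ at the seam $\partial G_k$ without already knowing that $E\cup G_k$ (or some enlargement) has finite perimeter --- is exactly what you leave open, as you yourself acknowledge. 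Observing that $\partial^*(E\cup G_k)\cap\Om\subset\partial G_k$ and passing from $\mathcal H(\partial G_k)<\infty$ to finite perimeter would be circular, since that passage is Federer's characterization again. In short: the reductions in your first paragraph are fine, but both the ``Kellogg-type'' lemma and the final gluing estimate are unsupported, and the first of them is where the actual content of the theorem lies.
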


The ``only if'' direction of Federer's characterization
is part of a more general structure theorem for sets of finite
perimeter, which in the metric setting
states that the perimeter measure is comparable to the Hausdorff measure of codimension one restricted to the measure-theoretic boundary.
This structure theorem is an indispensable tool in analysis of sets of finite perimeter, and hence more general
$\BV$ functions as well. While not equally essential,
the ``if'' direction of Federer's characterization has a number
of applications as well. For example, in \cite{KKST} the authors proved a characterization of Newton-Sobolev functions with zero boundary values by means
of a natural Lebesgue point-type condition on the boundary. However, the proof
relied on assuming that Federer's characterization holds;
now we know that this is the case under the usual assumptions on
the space. We will discuss other
applications in Section \ref{sec:consequences}.

Previously there have been some partial results toward a proof of the ``if'' direction.
The paper \cite{KoLaSh} showed that if the metric space is assumed to
contain a ``thick''  bundle of curves between each pair of points, then
the ``if'' direction can be proved by mimicking the Euclidean proof.
In the current paper we take a completely different
approach, which relies on \emph{fine potential theory}.
In the case $1<p<\infty$, fine potential theory deals with superharmonic functions
as understood by means of the \emph{fine topology};
see the monographs  \cite{AH,HKM,MZ} for the theory and its history in
the Euclidean setting,
and the recent papers \cite{BB-OD,BBL-SS,BBL-CCK,BBL-WC} for similar results in the
metric setting.
In \cite{L-Fed},
the author proved some analogous results in the case $p=1$, by relying
on certain continuity properties of $\BV$ functions proved earlier in \cite{L-FC,LaSh}.
An application of these results led to following characterization of sets of finite perimeter,
which is in the same vein as Federer's characterization.
Below, $\partial^1 I_E$ denotes the fine boundary of $E$, or more precisely of its measure-theoretic interior; one always has $\partial^*E\subset \partial^1 I_E$.

\begin{theorem}[{\cite[Theorem 1.1]{L-Fed}}]\label{thm:Fed style characterization}
	For an open set $\Omega\subset X$ and a  $\mu$-measurable set $E\subset X$, we have
	$P(E,\Omega)<\infty$ if and only if $\mathcal H(\partial^1 I_E\cap \Omega)<\infty$.
	Furthermore, then
	$\mathcal H((\partial^1 I_E\setminus \partial^*E)\cap\Omega)=0$.
\end{theorem}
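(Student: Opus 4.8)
The plan is to prove the two implications of the equivalence separately; throughout we may assume $E=I_E$, since passing to the measure-theoretic interior changes neither $P(E,\cdot)$ nor $\partial^1 I_E$, and the asserted identity $\mathcal H((\partial^1 I_E\setminus\partial^*E)\cap\Omega)=0$ will be extracted along the way.

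\smallskip

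Assume first that $P(E,\Omega)<\infty$. By Ambrosio's structure theorem (the ``only if'' direction of Federer's characterization, \cite{A1}) the perimeter measure $P(E,\cdot)$ is comparable in $\Omega$ to the restriction of $\mathcal H$ to $\partial^*E$, so in particular $\mathcal H(\partial^*E\cap\Omega)<\infty$; hence it suffices to show that $\mathcal H((\partial^1 I_E\setminus\partial^*E)\cap\Omega)=0$. Here I would invoke the fine continuity of $\BV$ functions from \cite{L-FC,LaSh}: the precise representative of $\chi_E$ is $1$-finely continuous at $\mathcal H$-a.e.\ point of $X$, and its set of $1$-fine discontinuity points coincides with $\partial^*E$ up to a set of $\mathcal H$-measure zero. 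If $\chi_E$ is $1$-finely continuous at $x$ then, the function being $\{0,1\}$-valued, its $1$-fine limit at $x$ equals $0$ or $1$; accordingly either $X\setminus I_E$ or $I_E$ is a $1$-fine neighbourhood of $x$, and in both cases $x\notin\partial^1 I_E$. Thus $\partial^1 I_E$ is contained, modulo an $\mathcal H$-null set, in $\partial^*E$, which yields both $\mathcal H(\partial^1 I_E\cap\Omega)<\infty$ and the last assertion of the theorem.

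\smallskip

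Assume now that $\mathcal H(\partial^1 I_E\cap\Omega)<\infty$. Given $\eps>0$, using the definition of the codimension-one Hausdorff measure I would cover $\partial^1 I_E\cap\Omega$ by balls $B_k=B(x_k,r_k)$, $k\in\N$, of radii $r_k<\eps$ with $\sum_k\mu(B_k)/r_k\le\mathcal H(\partial^1 I_E\cap\Omega)+\eps$; by the coarea formula applied to $x\mapsto d(x_k,x)$ we may also arrange that each sphere $S(x_k,r_k)$ is $\mu$-null and that $P(B_k,X)\le C\mu(B_k)/r_k$. Set $O_\eps:=\bigcup_k B_k$, so that $\partial^1 I_E\cap\Omega\subset O_\eps$ and $\mu(O_\eps)\le\eps\sum_k\mu(B_k)/r_k\to0$ as $\eps\to0$. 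The crucial point is that $\chi_E$ is $\mu$-a.e.\ constant on every ball $B$ with $B\subset\Omega\setminus\overline{O_\eps}$: such a $B$ is disjoint from $\partial^1 I_E$, hence also from $\partial^*E$ (recall that $\partial^*E\subset\partial^1 I_E$ always), so $B$ splits as the disjoint union of the two $1$-finely open sets $B\cap\fint I_E$ and $B\cap\fint(X\setminus I_E)$; since metric balls are $1$-finely connected, one of these is empty, whence $B\subset I_E$ or (using once more that $B$ misses $\partial^*E$) $B\subset I_{X\setminus E}$, and then $\mu(B\setminus E)=0$ or $\mu(B\cap E)=0$ respectively by the Lebesgue density theorem. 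Consequently $\chi_E$ is $\mu$-a.e.\ locally constant on the open set $\Omega\setminus\overline{O_\eps}$, so there is an open set $A_\eps\subset\Omega\setminus\overline{O_\eps}$, a union of connected components of $\Omega\setminus\overline{O_\eps}$, with $\chi_{A_\eps}=\chi_E$ $\mu$-a.e.\ on $\Omega\setminus\overline{O_\eps}$. Since the measure-theoretic boundary of $A_\eps$ in $\Omega$ is contained in $\partial O_\eps\subset\bigcup_k S(x_k,r_k)$, a routine covering argument shows that $A_\eps$ has finite perimeter with $P(A_\eps,\Omega)\le C\sum_k P(B_k,X)\le C\sum_k\mu(B_k)/r_k\le C(\mathcal H(\partial^1 I_E\cap\Omega)+\eps)$. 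Finally $\chi_{A_\eps}\to\chi_E$ in $L^1_{\mathrm{loc}}(\Omega)$ as $\eps\to0$, because the two functions differ only on $\Omega\cap\overline{O_\eps}$, a set of measure at most $\mu(O_\eps)\to0$; so lower semicontinuity of the perimeter gives $P(E,\Omega)\le\liminf_{\eps\to0}P(A_\eps,\Omega)\le C\,\mathcal H(\partial^1 I_E\cap\Omega)<\infty$.

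\smallskip

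The covering and lower-semicontinuity arguments are soft; the real content, and the main obstacle, lies in the $p=1$ fine potential theory. For the second implication, the local-constancy step rests on the $1$-fine connectedness of metric balls — the $p=1$ analogue of Fuglede's fine-connectedness theorem — together with the basic dictionary identifying $\mathrm{cap}_1$-null and $\mathcal H$-null sets (so that the $1$-fine decomposition of $X$ behaves well); for the first implication, the corresponding deep ingredient is the $\mathcal H$-a.e.\ $1$-fine continuity of $\BV$ functions. Establishing the former (for $p=1$) as part of the fine-theoretic groundwork, and correctly importing the latter from \cite{L-FC,LaSh}, is where the difficulty is concentrated; everything else is bookkeeping.
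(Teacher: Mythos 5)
This statement is not proved in the present paper at all: it is imported verbatim from \cite[Theorem 1.1]{L-Fed}, so your attempt has to be judged against that external proof. Your first implication is essentially sound and is in fact the route taken there: for $P(E,\Omega)<\infty$ one gets $\mathcal H(\partial^*E\cap\Omega)<\infty$ from \eqref{eq:def of theta}, and the identity $\mathcal H((\partial^1 I_E\setminus\partial^*E)\cap\Omega)=0$ does follow from the $\mathcal H$-a.e.\ $1$-fine continuity of $\BV$ functions outside the jump set proved in \cite{L-FC,LaSh}; your observation that at a $1$-fine continuity point of (the precise representative of) $\ch_E$ either $I_E$ or $O_E$ is a $1$-fine neighbourhood, so the point cannot lie on $\partial^1 I_E$, is correct.

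The hard implication, however, is not established by your argument, for two concrete reasons. First, the local-constancy step rests entirely on ``$1$-fine connectedness of metric balls''. No such result is available for $p=1$ (it is not in \cite{L-Fed,L-FC,L-WC} nor, to my knowledge, anywhere else), and you explicitly defer it; worse, the natural way to prove that a ball cannot be split into two nonempty disjoint $1$-finely open pieces is to apply a relative isoperimetric inequality for the $1$-fine boundary such as \eqref{eq:relative isoperimetric inequality for fine boundary} -- but in this paper that inequality is \emph{derived from} the very theorem you are trying to prove, so this route is circular as it stands (and note also that the inequality only produces fine-boundary points in the dilated ball $\lambda B$, and that metric balls need not even be metrically connected here). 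Second, the ``routine covering argument'' giving $P(A_\eps,\Omega)\le C\sum_k\mu(B_k)/r_k$ is not routine: $A_\eps$ is a union of components of $\Omega\setminus\overline{O_\eps}$, not a Boolean combination of the balls, and knowing only that its boundary lies in $\bigcup_k S(x_k,r_k)$ does not bound its perimeter by $\sum_k P(B_k,X)$ -- the measure $\mathcal H(S(x_k,r_k))$ is in general not comparable to $\mu(B_k)/r_k$, and passing from ``the variation measure of $\ch_{A_\eps}$ is concentrated on the spheres'' to the quantitative bound is itself a boxing-inequality/Federer-type step, i.e.\ exactly the content at stake. The proof in \cite{L-Fed} avoids both issues by arguing capacitively, building explicit Lipschitz/Newton--Sobolev competitors from coverings (much as in Steps 2--3 of the proof of Theorem \ref{thm:fine closure gives fine boundary} above) rather than invoking fine connectedness or a perimeter bound for $A_\eps$. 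Until you can supply a genuine $p=1$ fine-connectedness theorem (by a non-circular argument) and a correct perimeter estimate for $A_\eps$, the ``if'' direction remains unproved.
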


In the current paper, our main goal is to show that if $\mathcal H(\partial^*E\cap \Omega)<\infty$, then $\mathcal H((\partial^1 I_E\setminus \partial^*E)\cap\Omega)=0$ and thus
Theorem \ref{thm:Federers characterization} follows from
Theorem \ref{thm:Fed style characterization}.
The proofs will be given in Section \ref{sec:proofs}, and they rely mostly
on properties of the $1$-fine topology proved in \cite{L-Fed,L-WC}, as well as boxing inequality-type arguments.
Our methods and the underlying theory should be of interest already in Euclidean spaces,
where Federer's original argument has
remained (as far as we know) essentially the only known
proof for the characterization.

\paragraph{Acknowledgments.}
The author wishes to thank Nageswari Shan\-muga\-lingam
and Juha Kinnunen for reading the manuscript and giving
comments that helped improve the paper.

\section{Preliminaries}\label{sec:preliminaries}

In this section we introduce the standard definitions, notation,
and assumptions used in the paper.

Throughout this paper, $(X,d,\mu)$ is a complete metric space that is equip\-ped
with a metric $d$ and a Borel regular outer measure $\mu$ satisfying
a doubling property, meaning that
there exists a constant $C_d\ge 1$ such that
\[
0<\mu(B(x,2r))\le C_d\mu(B(x,r))<\infty
\]
for every ball $B(x,r):=\{y\in X:\,d(y,x)<r\}$.
We assume that $X$ consists of at least $2$ points.
Given a ball $B=B(x,r)$ and $\beta>0$, we sometimes abbreviate $\beta B:=B(x,\beta r)$.
Note that in metric spaces, a ball (as a set) does not necessarily
have a unique center point and radius, but we understand these to
be prescribed for all balls that we consider.
When we want to state that a constant $C$
depends on the parameters $a,b, \ldots$, we write $C=C(a,b,\ldots)$.

All functions defined on $X$ or its subsets will take values in $[-\infty,\infty]$.
A complete metric space equipped with a doubling measure is proper,
that is, closed and bounded sets are compact.
For any open set $\Omega\subset X$, we define $\liploc(\Om)$ as the set of
functions that are in the class $\Lip(\Om')$ for every open $\Om'\Subset\Om$;
here $\Omega'\Subset\Omega$ means that $\overline{\Omega'}$ is a
compact subset of $\Omega$.
Other local function spaces are defined analogously.

For any set $A\subset X$ and $0<R<\infty$, the restricted spherical Hausdorff content of codimension one is defined as
\[
\mathcal{H}_{R}(A):=\inf\left\{ \sum_{i=1}^{\infty}
\frac{\mu(B(x_{i},r_{i}))}{r_{i}}:\,A\subset\bigcup_{i=1}^{\infty}B(x_{i},r_{i}),\,r_{i}\le R\right\}.
\]
The codimension one Hausdorff measure of $A\subset X$ is then defined as
\[
\mathcal{H}(A):=\lim_{R\rightarrow 0}\mathcal{H}_{R}(A).
\]

By a curve we mean a nonconstant rectifiable continuous mapping from a compact interval of the real line
into $X$.
A nonnegative Borel function $g$ on $X$ is an upper gradient 
of a function $u$
on $X$ if for all curves $\gamma$, we have
\begin{equation}\label{eq:definition of upper gradient}
|u(x)-u(y)|\le \int_\gamma g\,ds,
\end{equation}
where $x$ and $y$ are the end points of $\gamma$
and the curve integral is defined by using an arc-length parametrization,
see \cite[Section 2]{HK} where upper gradients were originally introduced.
We interpret $|u(x)-u(y)|=\infty$ whenever  
at least one of $|u(x)|$, $|u(y)|$ is infinite.

Let $1\le p<\infty$ (we will work almost exclusively with $p=1$).
We say that a family of curves $\Gamma$ is of zero $p$-modulus if there is a 
nonnegative Borel function $\rho\in L^p(X)$ such that 
for all curves $\gamma\in\Gamma$, the curve integral $\int_\gamma \rho\,ds$ is infinite.
A property is said to hold for $p$-almost every curve
if it fails only for a curve family with zero $p$-modulus. 
If $g$ is a nonnegative $\mu$-measurable function on $X$
and (\ref{eq:definition of upper gradient}) holds for $p$-almost every curve,
we say that $g$ is a $p$-weak upper gradient of $u$. 
By only considering curves $\gamma$ in a set $A\subset X$,
we can talk about a function $g$ being a ($p$-weak) upper gradient of $u$ in $A$.

Given an open set $\Om\subset X$, we let
\[
\Vert u\Vert_{N^{1,p}(\Om)}:=\Vert u\Vert_{L^p(\Om)}+\inf \Vert g\Vert_{L^p(\Om)},
\]
where the infimum is taken over all $p$-weak upper gradients $g$ of $u$ in $\Om$.
The substitute for the Sobolev space $W^{1,p}$ in the metric setting is the Newton-Sobolev space
\[
N^{1,p}(\Om):=\{u:\|u\|_{N^{1,p}(\Om)}<\infty\},
\]
which was introduced in \cite{S}.
We understand a Newton-Sobolev function to be defined at every $x\in \Om$
(even though $\Vert \cdot\Vert_{N^{1,p}(\Om)}$ is then only a seminorm).
It is known that for any $u\in N_{\loc}^{1,p}(\Om)$ there exists a minimal $p$-weak
upper gradient of $u$ in $\Om$, always denoted by $g_{u}$, satisfying $g_{u}\le g$ 
almost everywhere in $\Om$, for any $p$-weak upper gradient $g\in L_{\loc}^{p}(\Om)$
of $u$ in $\Om$, see \cite[Theorem 2.25]{BB}.

The space of Newton-Sobolev functions with zero boundary values is defined as
\[
N_0^{1,p}(\Om):=\{u|_{\Om}:\,u\in N^{1,p}(X)\textrm{ and }u=0\textrm { on }X\setminus \Om\}.
\]
This class can be understood to be a subclass of $N^{1,p}(X)$ in a natural way.

The $p$-capacity of a set $A\subset X$ is defined as
\[
\capa_p(A):=\inf \Vert u\Vert_{N^{1,p}(X)},
\]
where the infimum is taken over all functions $u\in N^{1,p}(X)$ such that $u\ge 1$ in $A$.

The variational $1$-capacity of a set $A\subset \Om$
with respect to an open set $\Om\subset X$ is defined as
\[
\rcapa_1(A,\Om):=\inf \int_X g_u \,d\mu,
\]
where the infimum is taken over functions $u\in N_0^{1,1}(\Om)$ such that
$u\ge 1$ on $A$, and $g_u$ is the minimal $1$-weak upper gradient of $u$ (in $X$).
For basic properties satisfied by capacities, such as monotonicity and countable subadditivity, see \cite{BB}.

We will assume throughout the paper that $X$ supports a $(1,1)$-Poincar\'e inequality,
meaning that there exist constants $C_P>0$ and $\lambda \ge 1$ such that for every
ball $B(x,r)$, every $u\in L^1_{\loc}(X)$,
and every upper gradient $g$ of $u$,
we have
\[
\vint{B(x,r)}|u-u_{B(x,r)}|\, d\mu 
\le C_P r\vint{B(x,\lambda r)}g\,d\mu,
\]
where 
\[
u_{B(x,r)}:=\vint{B(x,r)}u\,d\mu :=\frac 1{\mu(B(x,r))}\int_{B(x,r)}u\,d\mu.
\]

Next we recall the definition and basic properties of functions
of bounded variation on metric spaces, following \cite{M}. See also e.g.
\cite{AFP, EvGa, Fed, Giu84, Zie89} for the classical 
theory in the Euclidean setting.
Let $\Om\subset X$ be an open set. Given a function $u\in L^1_{\loc}(\Om)$,
we define the total variation of $u$ in $\Om$ as
\[
\|Du\|(\Om):=\inf\left\{\liminf_{i\to\infty}\int_\Om g_{u_i}\,d\mu:\, u_i\in N^{1,1}_{\loc}(\Om),\, u_i\to u\textrm{ in } L^1_{\loc}(\Om)\right\},
\]
where each $g_{u_i}$ is the minimal $1$-weak upper gradient of $u_i$
in $\Om$.
(In \cite{M}, local Lipschitz constants were used instead of upper gradients, but
the properties of the total variation can be proved similarly with either definition.)
We say that a function $u\in L^1(\Om)$ is of bounded variation, 
and denote $u\in\BV(\Om)$, if $\|Du\|(\Om)<\infty$.
For an arbitrary set $A\subset X$, we define
\[
\|Du\|(A):=\inf\{\|Du\|(W):\, A\subset W,\,W\subset X
\text{ is open}\}.
\]
If $u\in L^1_{\loc}(\Om)$ and $\Vert Du\Vert(\Omega)<\infty$, $\|Du\|(\cdot)$ is
a Radon measure on $\Omega$ by \cite[Theorem 3.4]{M}.
A $\mu$-measurable set $E\subset X$ is said to be of finite perimeter if $\|D\ch_E\|(X)<\infty$, where $\ch_E$ is the characteristic function of $E$.
The perimeter of $E$ in $\Omega$ is also denoted by
\[
P(E,\Omega):=\|D\ch_E\|(\Omega).
\]

Applying the Poincar\'e inequality to sequences of approximating locally
Lipschitz functions in the definition of the total variation, we get
the following $\BV$ version:
for every ball $B(x,r)$ and every 
$u\in L^1_{\loc}(X)$, we have
\[
\vint{B(x,r)}|u-u_{B(x,r)}|\,d\mu
\le C_P r\, \frac{\Vert Du\Vert (B(x,\lambda r))}{\mu(B(x,\lambda r))}.
\]
For a $\mu$-measurable set $E\subset X$, this implies
the relative isoperimetric inequality
\begin{equation}\label{eq:relative isoperimetric inequality}
\min\{\mu(B(x,r)\cap E),\,\mu(B(x,r)\setminus E)\}\le 2 C_P rP(E,B(x,\lambda r));
\end{equation}
see e.g. \cite[Equation (3.1)]{KoLa}.

The measure-theoretic interior of a set $E\subset X$ is defined as
\[
I_E:=
\left\{x\in X:\,\lim_{r\to 0}\frac{\mu(B(x,r)\setminus E)}{\mu(B(x,r))}=0\right\},
\]
and the measure-theoretic exterior as
\[
O_E:=
\left\{x\in X:\,\lim_{r\to 0}\frac{\mu(B(x,r)\cap E)}{\mu(B(x,r))}=0\right\}.
\]
The measure-theoretic boundary $\partial^{*}E$ is defined as the set of points
$x\in X$
at which both $E$ and its complement have strictly positive upper density, i.e.
\begin{equation}\label{eq:measure theoretic boundary}
\limsup_{r\to 0}\frac{\mu(B(x,r)\cap E)}{\mu(B(x,r))}>0\quad
\textrm{and}\quad\limsup_{r\to 0}\frac{\mu(B(x,r)\setminus E)}{\mu(B(x,r))}>0.
\end{equation}
Then $X=I_E\cup O_E\cup \partial^*E$.

For an open set $\Omega\subset X$ and a $\mu$-measurable set $E\subset X$ with $P(E,\Omega)<\infty$, we know that for any Borel set $A\subset\Omega$,
\begin{equation}\label{eq:def of theta}
P(E,A)=\int_{\partial^{*}E\cap A}\theta_E\,d\mathcal H,
\end{equation}
where
$\theta_E\colon \partial^*E\to [\alpha,C_d]$ with $\alpha=\alpha(C_d,C_P,\lambda)>0$, see \cite[Theorem 5.3]{A1} 
and \cite[Theorem 4.6]{AMP}.

If $\Om\subset X$ is an open set and $u,v\in L^1_{\loc}(\Om)$, then
\begin{equation}\label{eq:variation of min and max}
\Vert D\min\{u,v\}\Vert(\Om)+\Vert D\max\{u,v\}\Vert(\Om)\le
\Vert Du\Vert(\Om)+\Vert Dv\Vert(\Om);
\end{equation}
for a proof see e.g. \cite[Lemma 3.1]{L-ZB}.

The lower and upper approximate limits of a function $u$ on $X$ are defined respectively by
\[
u^{\wedge}(x):
=\sup\left\{t\in\R:\,\lim_{r\to 0}\frac{\mu(B(x,r)\cap\{u<t\})}{\mu(B(x,r))}=0\right\}
\]
and
\[
u^{\vee}(x):
=\inf\left\{t\in\R:\,\lim_{r\to 0}\frac{\mu(B(x,r)\cap\{u>t\})}{\mu(B(x,r))}=0\right\}.
\]

Unlike Newton-Sobolev functions, we understand $\BV$ functions to be
$\mu$-equivalence classes. To consider fine properties, we need to
consider the pointwise representatives $u^{\wedge}$ and $u^{\vee}$.
We note that for $u=\ch_E$ with $E\subset X$, we have $x\in I_E$ if and only if $u^{\wedge}(x)=u^{\vee}(x)=1$, $x\in O_E$ if and only if $u^{\wedge}(x)=u^{\vee}(x)=0$, and $x\in \partial^*E$ if and only if $u^{\wedge}(x)=0$ and $u^{\vee}(x)=1$.\\

\emph{Throughout this paper we assume that $(X,d,\mu)$ is a complete metric space
	that is equipped with the doubling measure $\mu$ and supports a
	$(1,1)$-Poincar\'e inequality.}

\section{The $1$-fine topology}\label{sec:fine topology}

In this section we have gathered all the
results concerning the $1$-fine topology that
our argument will rely on. For these, we refer to \cite{L-Fed,L-FC,L-WC}.
Most of the results are analogous to those that hold in the case $1<p<\infty$,
which has been studied in the metric setting in \cite{BB-OD,BBL-CCK,BBL-WC}.

\begin{definition}\label{def:1 fine topology}
We say that $A\subset X$ is $1$-thin at the point $x\in X$ if
\[
\lim_{r\to 0}r\frac{\rcapa_1(A\cap B(x,r),B(x,2r))}{\mu(B(x,r))}=0.
\]
We also say that a set $U\subset X$ is $1$-finely open if $X\setminus U$ is $1$-thin at every $x\in U$. Then we define the $1$-fine topology as the collection of $1$-finely open sets on $X$.

We denote the $1$-fine interior of a set $H\subset X$, i.e. the largest $1$-finely open set contained in $H$, by $\fint H$. We denote the $1$-fine closure of a set $H\subset X$, i.e. the smallest $1$-finely closed set containing $H$, by $\overline{H}^1$. The $1$-fine boundary of $H$
is $\partial^1 H:=\overline{H}^1\setminus \fint H$.
Finally, the $1$-base $b_1 H$ is defined as the set of points
where $H$ is \emph{not} $1$-thin.
\end{definition}

See \cite[Section 4]{L-FC} for discussion on this definition, and for a proof of the fact that the
$1$-fine topology is indeed a topology.
By \cite[Proposition 6.16]{BB}, a set $A\subset X$ is $1$-thin at $x\in X$
if and only if
\[
\lim_{r\to 0}\frac{\rcapa_1(A\cap B(x,r),B(x,2r))}
{\rcapa_1(B(x,r),B(x,2r))}=0,
\]
and so it is clear that $W\subset b_1 W$ for any open set $W\subset X$.

Now we collect some facts concerning the $1$-fine topology proved in \cite{L-Fed}.
According to \cite[Corollary 3.5]{L-Fed}, the $1$-fine closure
of $A\subset X$ can be characterized in the following way:
\begin{equation}\label{eq:characterization of fine closure}
\overline{A}^1=A\cup b_1 A.
\end{equation}
From this it easily follows that for any $A\subset X$ and any ball $B(x,r)$, we have
$\overline{A}^1\cap B(x,r)\subset \overline{A\cap B(x,r)}^1$, and then
by \cite[Proposition 3.3]{L-Fed} we get
\begin{equation}\label{eq:variational capacity of fine closure}
\rcapa_1(\overline{A}^1\cap B(x,r),B(x,2r))
=\rcapa_1(A\cap B(x,r),B(x,2r)).
\end{equation}
By \cite[Lemma 4.6]{L-Fed} the $1$-fine boundary of a measure-theoretic
interior can be characterized as follows:
for any $\mu$-measurable set $E\subset X$,
\begin{equation}\label{eq:characterization of fine boundary}
	\partial^1 I_E=b_1 I_E\cap b_1 (X\setminus I_E).
\end{equation}
By \cite[Lemma 3.1]{L-Fed} we know that for any $\mu$-measurable set $E\subset X$,
\begin{equation}\label{eq:meas theor bdry part of fine boundary}
\partial^*E\subset \partial^1 I_E.
\end{equation}
Conversely, if $\Omega\subset X$ is open and $E\subset X$ is  $\mu$-measurable such that $P(E,\Omega)<\infty$, then
by Theorem \ref{thm:Fed style characterization},
\[
\mathcal H((\partial^1 I_E\setminus \partial^*E)\cap\Omega)=0.
\]
Combining this with \eqref{eq:def of theta} gives
\begin{equation}\label{eq:perimeter and measure of 1boundary are comparable}
\alpha \mathcal H(\partial^1 I_E\cap \Om)\le P(E,\Om)
\le C_d\mathcal H(\partial^1 I_E\cap \Om).
\end{equation}
In fact this holds for every $\mu$-measurable $E\subset X$; to see this
we can assume that $\mathcal H(\partial^1 I_E\cap \Om)<\infty$,
and then $P(E,\Om)<\infty$ by Theorem \ref{thm:Fed style characterization}.

We also have the following version of the relative isoperimetric inequality:
for every ball $B(x,r)$ and every $\mu$-measurable $E\subset X$,
\begin{equation}\label{eq:relative isoperimetric inequality for fine boundary}
\min\{\mu(B(x,r)\cap E),\,\mu(B(x,r)\setminus E)\}\le 2 C_P C_d r
\mathcal H(\partial^1 I_E\cap B(x,\lambda r));
\end{equation}
this follows from the ordinary relative isoperimetric inequality
\eqref{eq:relative isoperimetric inequality} and
\eqref{eq:perimeter and measure of 1boundary are comparable}.

\begin{remark}\label{rmk:boundaries}
It may seem strange to talk about $\partial^1 I_E$, as it seems that we are first taking
the interior in one topology and then the boundary in another. However, if we define
the measure topology more axiomatically, then $I_E$ is actually \emph{not} the interior of $E$
in the measure topology, and should be seen as a measure-theoretic quantity rather than
a topological one (see \cite[Remark 4.9]{L-Fed}). Moreover, $\partial^*E$ is actually
the boundary of $I_E$ in the measure topology; let us denote it
by $\partial^0 I_E$. Thus $\partial^1 I_E$ is a natural set to consider as well.
Finally, we can note that $\partial^1 I_E=\partial^1 O_E$, see
\cite[Lemma 4.8]{L-Fed}.
\end{remark}

The following \emph{weak Cartan property} in the case $p=1$ was proved in \cite{L-WC}.
Note that here we have not defined the concept of $1$-superminimizers, but we will
not need it at all in this paper.

\begin{theorem}[{\cite[Theorem 5.2]{L-WC}}]\label{thm:weak Cartan property in text}
	Let $A\subset X$ and let $x\in X\setminus A$ be such that $A$
	is $1$-thin at $x$.
	Then there exist $R>0$ and $E_0,E_1\subset X$ such that $\ch_{E_0},\ch_{E_1}\in\BV(X)$,
	$\ch_{E_0}$ and $\ch_{E_1}$ are $1$-superminimizers in $B(x,R)$,
	$\max\{\ch_{E_0}^{\wedge},\ch_{E_1}^{\wedge}\}=1$ in $A\cap B(x,R)$,
	$\ch_{E_0}^{\vee}(x)=0=\ch_{E_1}^{\vee}(x)$,
	$\{\max\{\ch_{E_0}^{\vee},\ch_{E_1}^{\vee}\}>0\}$ is $1$-thin at $x$,
	and
	\begin{equation}\label{eq:weak Cartan property energy thinness}
	\lim_{r\to 0}r\frac{P(E_0,B(x,r))}{\mu(B(x,r))}=0,\qquad
	\lim_{r\to 0}r\frac{P(E_1,B(x,r))}{\mu(B(x,r))}=0.
	\end{equation}
\end{theorem}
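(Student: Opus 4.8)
Here is a plan for a proof.

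The plan is to carry over to the case $p=1$ the construction that proves the weak Cartan property for $1<p<\infty$ (compare \cite{BB-OD,BBL-WC}), with capacitary potentials replaced by characteristic functions of perimeter‑minimizing sets with obstacles, and ``sums over dyadic annuli'' replaced by ``unions over a sparse sequence of annuli''. The two sets $E_0,E_1$ arise from splitting this sequence into even‑ and odd‑indexed members; as for $1<p<\infty$, this split is what keeps the $1$-superminimizer property of the resulting unions transparent, and is the reason one set does not suffice.

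First I would use the $1$-thinness of $A$ at $x$ to choose radii $\rho_1>\rho_2>\cdots\to 0$ with $\rho_{k+1}<\rho_k/8$. Writing $B_k:=B(x,\rho_k)$, $S_k:=\overline{B}_k\setminus B_{k+1}$ and $\Om_k:=2B_k\setminus\overline{B}(x,\rho_{k+1}/2)$, so that the $\Om_k$ of a fixed parity are pairwise disjoint, I would then for each $k$ pick an open set $U_k$ with $A\cap S_k\subset U_k\Subset\Om_k$ and $\rcapa_1(U_k,\Om_k)\le\rcapa_1(A\cap S_k,\Om_k)+2^{-k}$, and take $F_k$ to be a set with $U_k\subset F_k\Subset\Om_k$ that (nearly) minimizes $P(\cdot,X)$ among such sets. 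Then $\ch_{F_k}\in\BV(X)$ with $P(F_k,X)\le\rcapa_1(U_k,\Om_k)+2^{-k}\le\rcapa_1(A\cap S_k,\Om_k)+2^{-k+1}$ (via the coarea formula, a competing set is no worse than the superlevel sets of any function admissible for $\rcapa_1(U_k,\Om_k)$); $\ch_{F_k}^{\wedge}=1$ on $U_k\supset A\cap S_k$; by the relative isoperimetric inequality \eqref{eq:relative isoperimetric inequality}, $\mu(F_k)\le\eps_k\mu(B_k)$ for some $\eps_k\to 0$; and $\ch_{F_k}$ is a $1$-superminimizer in the fixed ball $B(x,R):=2B_1$, since it is a perimeter minimizer in $\Om_k\setminus\overline{U_k}$ and is locally constant elsewhere (in particular it vanishes near $\partial\Om_k$ because $F_k\Subset\Om_k$). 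Here the radii are moreover chosen sparse enough that $\sum_k\rcapa_1(A\cap S_k,\Om_k)<\infty$; this uses that for $x\notin A$ the $1$-thinness of $A$ forces $\rcapa_1(A\cap B(x,r),B(x,2r))\to 0$ as $r\to 0$, so that a sufficiently sparse subsequence of radii can be extracted.

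I would then set $E_0:=\bigcup_{k\text{ even}}F_k$ and $E_1:=\bigcup_{k\text{ odd}}F_k$. Subadditivity of the perimeter gives $P(E_i,X)\le\sum_k P(F_k,X)<\infty$, so $\ch_{E_i}\in\BV(X)$, and $\Vert D\ch_{E_i}\Vert(\{x\})=\lim_{\delta\to 0}\sum_{\rho_k\lesssim\delta}P(F_k,X)=0$. Since the $\Om_k$ of a fixed parity are pairwise disjoint and separated by open annular gaps on which the corresponding $E_i$ is empty, $\ch_{E_i}$ locally coincides with one of the $1$-superminimizers $\ch_{F_k}$ or with the constant $0$; pasting these, and using $\Vert D\ch_{E_i}\Vert(\{x\})=0$ to cross the single accumulation point $x$, shows $\ch_{E_i}$ is a $1$-superminimizer in $B(x,R)$. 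The remaining properties are read off: since $\bigcup_k S_k\supset A\cap B(x,R)$ and $\ch_{F_k}^{\wedge}=1$ on $A\cap S_k$, we get $\max\{\ch_{E_0}^{\wedge},\ch_{E_1}^{\wedge}\}=1$ on $A\cap B(x,R)$; for small $r$ only the $F_k$ with $\rho_k$ comparable to $r$ meet $B(x,r)$, and each contributes mass at most $\eps_k\mu(B_k)\lesssim\eps_k\mu(B(x,r))$, so $\ch_{E_i}^{\vee}(x)=0$; away from $x$ the set $\{\max\{\ch_{E_0}^{\vee},\ch_{E_1}^{\vee}\}>0\}$ equals $\bigcup_k\{\ch_{F_k}^{\vee}>0\}$, and since $\{\ch_{F_k}^{\vee}>0\}\Subset\Om_k$ with $\rcapa_1(\{\ch_{F_k}^{\vee}>0\},\Om_k)\le P(F_k,X)$, summing these capacity bounds over the scales below a given radius shows this set is $1$-thin at $x$; finally, for small $r$ we have $P(E_i,B(x,r))\le\sum_{\rho_k\lesssim r}P(F_k,X)+P(F_{k_0},X)$ for the at most one index $k_0$ whose $\Om_{k_0}$ straddles $\partial B(x,r)$, and with the $\rho_k$ chosen so that these tails are $o(\mu(B(x,r))/r)$ this gives \eqref{eq:weak Cartan property energy thinness}.

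The main obstacle is the construction of the sets $F_k$: realizing a genuine $\BV$ ($p=1$) analogue of balayage that simultaneously controls perimeter, measure, and localization inside $\Om_k$ away from $x$ (so that, e.g., $F_k\Subset\Om_k$ and the superminimizer property survives the pasting), and --- intertwined with this --- choosing the radii $\rho_k$ so that $\sum_k P(F_k,X)<\infty$ while the tail sums still decay fast enough for the energy thinness \eqref{eq:weak Cartan property energy thinness}. This is exactly where the ``$\lim=0$'' form of $1$-thinness from Definition \ref{def:1 fine topology} (the degenerate case $1/(p-1)=\infty$ of the Wiener condition, in contrast with the genuine summability available for $1<p<\infty$) has to be used. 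A secondary delicate point is the pasting of infinitely many superminimizers across the accumulation point $x$.
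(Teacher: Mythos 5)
This statement is not proved in the paper at all: it is imported verbatim from \cite[Theorem 5.2]{L-WC}, so there is no internal proof to compare your attempt against. What you have written is, in effect, a sketch of the cited paper's own argument, and at the level of strategy it is the right one: an annular decomposition around $x$, a $\BV$/perimeter analogue of balayage of $A$ in each annulus, and an even/odd splitting of the scales to produce the two sets $E_0,E_1$, in analogy with the case $1<p<\infty$ in \cite{BBL-WC}.

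As a proof, however, the plan has genuine gaps precisely at the load-bearing points, which you yourself defer as ``the main obstacle''. First, a set that \emph{nearly} minimizes perimeter subject to $U_k\subset F_k\Subset\Om_k$ is not a $1$-superminimizer; you need an exact solution of an obstacle problem, its existence (BV compactness does not preserve the constraint $F_k\Subset\Om_k$, and an upper constraint active at $\partial\Om_k$ would destroy the superminimizer property there), the fact that a solution can be taken to be a characteristic function (solutions with obstacle $\ch_{U_k}$ need not be indicator functions, so one must pass to a superlevel set and show it is still a superminimizer), and a gluing lemma allowing you to paste infinitely many $1$-superminimizers across the gaps and across the accumulation point $x$ --- all of this is the actual technical content of \cite{L-WC}, not a detail. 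Second, the estimates \eqref{eq:weak Cartan property energy thinness} do not follow from ``choosing the radii sparse enough''. Since $1$-thinness in Definition \ref{def:1 fine topology} is a limit condition rather than a summable Wiener series, you only know $\rcapa_1(A\cap B(x,r),B(x,2r))\le \eps(r)\mu(B(x,r))/r$ with an unknown rate $\eps(r)\to 0$; localizing to $\Om_k=2B_k\setminus\overline{B}(x,\rho_{k+1}/2)$ via a cutoff costs a factor of order $\rho_k/\rho_{k+1}$, so sparsening the scales \emph{worsens} the bound on $P(F_k,X)$, while for $r\approx\rho_k$ you must bound $\sum_{j>k}P(F_j,X)$ by $o(1)\,\mu(B(x,r))/r$ even though $\mu(B_j)/\rho_j$ need not be comparable to $\mu(B_k)/\rho_k$ in a general doubling space. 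These two requirements pull in opposite directions, and your plan offers no mechanism to reconcile them. Smaller asserted-but-unproved steps ($\mu(F_k)\le\eps_k\mu(B_k)$, and $\rcapa_1(\{\ch_{F_k}^{\vee}>0\},\Om_k)\lesssim P(F_k,X)$, which amounts to identifying the variational $1$-capacity with an infimum of perimeters) also need justification. In short: the architecture matches the source, but the substance of the theorem --- the $\BV$ balayage construction and the delicate choice of scales --- is exactly what is missing.
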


The following simpler formulation will be sufficient for our purposes.

\begin{corollary}\label{cor:weak Cartan property}
	Let $A\subset X$ and let $x\in X\setminus A$ be such that $A$
	is $1$-thin at $x$.
	Then there exist $R>0$ and $F\subset X$ such that $\ch_{F}\in\BV(X)$,
	$A\cap B(x,R)\subset I_F$,
	$I_F$ is $1$-thin at $x$, and
	\begin{equation}\label{eq:weak Cartan property energy thinness for F}
	\lim_{r\to 0}r\frac{P(F,B(x,r))}{\mu(B(x,r))}=0.
	\end{equation}
\end{corollary}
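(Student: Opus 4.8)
The plan is to deduce the corollary from Theorem \ref{thm:weak Cartan property in text} by collapsing the two sets $E_0,E_1$ into a single set. Apply that theorem to obtain $R>0$ and $E_0,E_1\subset X$ with all the listed properties, and set $F:=E_0\cup E_1$, keeping the same $R$. Then $\ch_F=\max\{\ch_{E_0},\ch_{E_1}\}$, and I expect every required property of $F$ to follow from the corresponding properties of $E_0$ and $E_1$ by elementary manipulations of maxima.

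First I would check that $\ch_F\in\BV(X)$: by \eqref{eq:variation of min and max} applied with $\Om=X$ we have $\|D\ch_F\|(X)\le\|D\ch_{E_0}\|(X)+\|D\ch_{E_1}\|(X)<\infty$, while $\mu(F)\le\mu(E_0)+\mu(E_1)<\infty$ since $\ch_{E_0},\ch_{E_1}\in\BV(X)$. Next I would record the two pointwise (everywhere) inequalities $(\max\{u,v\})^{\wedge}\ge\max\{u^{\wedge},v^{\wedge}\}$ and $(\max\{u,v\})^{\vee}\le\max\{u^{\vee},v^{\vee}\}$; these follow directly from the definitions of the approximate limits together with the set identities $\{\max\{u,v\}<t\}\subset\{u<t\}$ and $\{\max\{u,v\}>t\}=\{u>t\}\cup\{v>t\}$. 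Applying the first inequality with $u=\ch_{E_0}$, $v=\ch_{E_1}$ and using that $\max\{\ch_{E_0}^{\wedge},\ch_{E_1}^{\wedge}\}=1$ on $A\cap B(x,R)$ gives $\ch_F^{\wedge}=1$ there, hence (recalling $\ch_F^{\wedge}\le\ch_F^{\vee}\le 1$) also $\ch_F^{\vee}=1$ on $A\cap B(x,R)$; by the characterization of the measure-theoretic interior recalled in Section \ref{sec:preliminaries}, this means $A\cap B(x,R)\subset I_F$.

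For the $1$-thinness of $I_F$ at $x$, I would argue that $I_F\subset\{\ch_F^{\vee}>0\}$ and, by the second pointwise inequality, $\{\ch_F^{\vee}>0\}\subset\{\max\{\ch_{E_0}^{\vee},\ch_{E_1}^{\vee}\}>0\}$; the latter set is $1$-thin at $x$ by Theorem \ref{thm:weak Cartan property in text}, and $1$-thinness is inherited by subsets via monotonicity of the variational $1$-capacity, so $I_F$ is $1$-thin at $x$. Finally, for the energy thinness \eqref{eq:weak Cartan property energy thinness for F}, applying \eqref{eq:variation of min and max} with $\Om=B(x,r)$ gives $P(F,B(x,r))\le P(E_0,B(x,r))+P(E_1,B(x,r))$ for every $r>0$, so multiplying by $r/\mu(B(x,r))$ and letting $r\to0$ the conclusion follows from \eqref{eq:weak Cartan property energy thinness}.

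There is no serious obstacle here; the argument is essentially bookkeeping. The only point deserving care is that the comparisons between the approximate limits of $\ch_F$ and those of $\ch_{E_0},\ch_{E_1}$ must hold at \emph{every} point, not merely $\mu$-almost everywhere, since we need the genuinely pointwise inclusion $A\cap B(x,R)\subset I_F$ and the pointwise description of $\{\ch_F^{\vee}>0\}$. One may also note in passing that $\ch_F^{\vee}(x)\le\max\{\ch_{E_0}^{\vee}(x),\ch_{E_1}^{\vee}(x)\}=0$, so automatically $x\in O_F$, although this is not needed.
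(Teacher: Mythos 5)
Your proof is correct and follows essentially the same route as the paper: set $F:=E_0\cup E_1$, use \eqref{eq:variation of min and max} for $\ch_F\in\BV(X)$ and for \eqref{eq:weak Cartan property energy thinness for F}, deduce $A\cap B(x,R)\subset I_F$ from $\max\{\ch_{E_0}^{\wedge},\ch_{E_1}^{\wedge}\}=1$, and get $1$-thinness of $I_F$ by including it in the $1$-thin set $\{\max\{\ch_{E_0}^{\vee},\ch_{E_1}^{\vee}\}>0\}$ and using monotonicity of the capacity. Your pointwise inequalities for the approximate limits of a maximum are just a slightly more explicit phrasing of the paper's inclusions $A\cap B(x,R)\subset I_{E_0}\cup I_{E_1}\subset I_F$ and $I_F\subset I_{E_0}\cup I_{E_1}\cup\partial^*E_0\cup\partial^*E_1$, so no substantive difference.
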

\begin{proof}
Take $E_0,E_1\subset X$ as given by
Theorem \ref{thm:weak Cartan property in text}, and set
$F:=E_0\cup E_1$. By \eqref{eq:variation of min and max} we obtain
$\ch_{F}\in\BV(X)$, and \eqref{eq:variation of min and max}
and \eqref{eq:weak Cartan property energy thinness} together give
\eqref{eq:weak Cartan property energy thinness for F}.
From the fact that $\max\{\ch_{E_0}^{\wedge},\ch_{E_1}^{\wedge}\}=1$ in $A\cap B(x,R)$
we obtain that
\[
A\cap B(x,R)\subset I_{E_0}\cup I_{E_1}\subset I_F.
\]
Finally, since $\{\max\{\ch_{E_0}^{\vee},\ch_{E_1}^{\vee}\}>0\}$ is $1$-thin at $x$, then so is
\[
I_{E_0}\cup I_{E_1}\cup \partial^*E_0\cup\partial^*E_1\supset I_F.
\]
\end{proof}

In \cite[Lemma 4.4]{L-WC} it was also shown that if $A\subset X$
is $1$-thin at a point $x\in X\setminus A$,
then there exists an open set that
contains
$A$ and is also $1$-thin at $x$, that is,
\begin{equation}\label{eq:existence of thin open set}
\textrm{If }x\notin A\cup b_1 A, \textrm{then there exists an open }W\supset A\textrm{ such that }x\notin b_1 W.
\end{equation}

\section{Proof of the characterization}\label{sec:proofs}

In \cite[Theorem 3.11]{KoLa} it was shown that
for any $\mu$-measurable set $E\subset X$,
we have $\overline{\partial^* E}=\partial I_E$,
that is, the closure of the measure-theoretic boundary (in the metric topology)
is the whole topological boundary of a suitable representative of $E$
(namely the measure-theoretic interior $I_E$).
Now we prove the analogous result with the metric topology replaced by the 
$1$-fine topology.
This will be the crux of our proof of Federer's characterization.

\begin{theorem}\label{thm:fine closure gives fine boundary}
	For any $\mu$-measurable set $E\subset X$, we have $\overline{\partial^*E}^1=\partial^1 I_E$.
\end{theorem}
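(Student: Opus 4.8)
The plan is to prove the two inclusions separately, the first being routine and the second carrying the real content of the theorem.

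The inclusion $\overline{\partial^*E}^1\subseteq\partial^1 I_E$ is immediate: by \eqref{eq:meas theor bdry part of fine boundary} we have $\partial^*E\subseteq\partial^1 I_E$, and $\partial^1 I_E=\overline{I_E}^1\setminus\fint I_E$ is $1$-finely closed, so taking $1$-fine closures preserves the inclusion. For the reverse inclusion I would fix $x\in\partial^1 I_E$; by \eqref{eq:characterization of fine closure} it suffices to show that either $x\in\partial^*E$ or $\partial^*E$ fails to be $1$-thin at $x$. So I assume toward a contradiction that $x\notin\partial^*E$ and that $\partial^*E$ is $1$-thin at $x$. Replacing $E$ by $X\setminus E$ if necessary — note $\partial^*(X\setminus E)=\partial^*E$, $I_{X\setminus E}=O_E$, and $\partial^1 O_E=\partial^1 I_E$ by Remark~\ref{rmk:boundaries} — I may assume $x\in I_E$. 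Then by \eqref{eq:characterization of fine boundary} we have $x\in b_1(X\setminus I_E)$, i.e. $X\setminus I_E=O_E\cup\partial^*E$ is \emph{not} $1$-thin at $x$; since $\partial^*E$ is $1$-thin at $x$ and the variational $1$-capacity is subadditive, it is therefore enough to prove that $O_E$ \emph{is} $1$-thin at $x$ — for then $X\setminus I_E$ would be $1$-thin at $x$, giving $x\in\fint I_E$ and contradicting $x\in\partial^1 I_E$.

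To this end I would apply Corollary~\ref{cor:weak Cartan property} to $A=\partial^*E$ (which is $1$-thin at $x\notin\partial^*E$), obtaining $R>0$ and $F\subset X$ with $\ch_F\in\BV(X)$, $\partial^*E\cap B(x,R)\subseteq I_F$, $I_F$ $1$-thin at $x$, and $rP(F,B(x,r))/\mu(B(x,r))\to 0$ as $r\to 0$. The set $F$ "traps" the measure-theoretic boundary of $E$ near $x$, so that away from $I_F$ the set $E$ has no measure-theoretic boundary and hence vanishing perimeter there. A first technical step is to observe that $\partial^*F$ is itself $1$-thin at $x$: by \eqref{eq:def of theta} applied to $F$ we have $P(F,B(x,r))\ge\alpha\,\mathcal H(\partial^*F\cap B(x,r))$, whence $r\,\mathcal H(\partial^*F\cap B(x,r))/\mu(B(x,r))\to 0$; and a boxing-inequality estimate — cover $\partial^*F\cap B(x,r)$ by small balls almost realizing the restricted Hausdorff content and sum the variational $1$-capacities of the associated Lipschitz cutoffs — gives $\rcapa_1(\partial^*F\cap B(x,r),B(x,2r))\le C\,\mathcal H(\partial^*F\cap B(x,2r))$, so $\partial^*F$ is $1$-thin at $x$. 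Consequently $I_F\cup\partial^*F$ is $1$-thin at $x$. Since for $y\in B(x,R)\cap O_E$ either $y\in I_F$, or $y\in\partial^*F$, or $y\in O_F$ (and then a density computation gives $y\in O_{E\cup F}$), it now suffices to prove that $O_{E\cup F}$ is $1$-thin at $x$; here $x\in I_E\subseteq I_{E\cup F}$, and a further density computation gives $\partial^*(E\cup F)\cap B(x,R)\subseteq\partial^*F$, so that in particular $\mathcal H(\partial^*(E\cup F)\cap B(x,r))\le\alpha^{-1}P(F,B(x,r))$.

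The main obstacle, as I see it, is precisely the estimate $\rcapa_1(O_{E\cup F}\cap B(x,r),B(x,2r))=o(\mu(B(x,r))/r)$. My plan would be to exploit that, since $\partial^*E\cap B(x,R)\subseteq I_F$, the set $E\cup F$ agrees with $E$ (which has zero perimeter there) away from $I_F$, and — using \eqref{eq:existence of thin open set} to enclose $I_F\cap B(x,R)$ in an open, $1$-thin-at-$x$ set, or working directly with the metric closure of $I_F$ — to split $O_{E\cup F}\cap B(x,r)$ into a part contained in a $1$-thin-at-$x$ set and a part lying in an open region on which $E\cup F$ is, up to a $\mu$-null set, a disjoint union of relatively clopen pieces, each with relative boundary confined to the small sets $I_F$ and $\partial^*F$. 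A boxing/covering estimate should then bound the $1$-capacity of this second part by the capacity of $I_F$ near $x$ plus a multiple of $\mathcal H(\partial^*F\cap B(x,Cr))$, both of which are $o(\mu(B(x,r))/r)$ by the choice of $F$. The delicate point is that $O_{E\cup F}$ is defined by densities and can be strictly larger than $X\setminus(E\cup F)$, so the competitors in the capacity estimate must be constructed from the geometry of $I_F$ and $\partial^*F$ rather than from $\ch_{X\setminus(E\cup F)}$; making this quantitative, via the boxing-inequality machinery together with the fine-topology facts collected in Section~\ref{sec:fine topology}, is the heart of the matter. Once $O_{E\cup F}$, hence $O_E$, hence $X\setminus I_E$, is shown to be $1$-thin at $x$, the contradiction is immediate and the theorem follows.
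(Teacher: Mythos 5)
Your reductions up to the final step are fine: the easy inclusion, the pointwise reformulation, the passage to $x\in I_E$, and the observation that it would suffice to show $O_E$ (equivalently, after applying Corollary \ref{cor:weak Cartan property} to $\partial^*E$, the set $O_{E\cup F}$) is $1$-thin at $x$. But the step you yourself flag as ``the heart of the matter'' is a genuine gap, and the route you sketch for it begs the question. To bound $\rcapa_1(O_{E\cup F}\cap B(x,r),B(x,2r))$ by a boxing/covering argument, you must, at each point $y$ of $O_{E\cup F}$ near $x$, stop at a radius $r_y$ where the density of $G:=E\cup F$ is pinched between two structural constants, and then convert the lower bound $\mu(B(y,r_y))/r_y$ into a quantity you can sum. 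The only conversions available are the isoperimetric inequalities \eqref{eq:relative isoperimetric inequality} and \eqref{eq:relative isoperimetric inequality for fine boundary}, which produce $P(G,B(y,\lambda r_y))$ or $\mathcal H(\partial^1 I_G\cap B(y,\lambda r_y))$ --- neither of which is controlled by the quantity you actually have, namely $\mathcal H(\partial^*G\cap B(x,Cr))\le \alpha^{-1}P(F,B(x,Cr))$, unless one already knows that $\mathcal H(\partial^*G)$ small forces $P(G)$ (or $\mathcal H(\partial^1 I_G)$) small; that is precisely Theorem \ref{thm:Federers characterization} together with \eqref{eq:perimeter and measure of 1boundary are comparable}, i.e.\ the statement being proved. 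The same circularity hides in your claim that away from $I_F\cup\partial^*F$ the set $E\cup F$ decomposes into ``relatively clopen pieces'': absence (or $\mathcal H$-smallness) of the measure-theoretic boundary in an open region does not yield local a.e.\ constancy or small perimeter there without Federer's characterization --- in fact that implication is essentially the classical content of the theorem. Note that the paper can apply \eqref{eq:perimeter and measure of 1boundary are comparable} only to the auxiliary set $F$ (which has finite perimeter by the weak Cartan property, so Theorem \ref{thm:Fed style characterization} applies), never to $E$ or $E\cup F$.

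The paper's proof avoids ever proving a thinness statement for $O_E$. Assuming $\partial^*E$ misses a $1$-fine neighbourhood of $x_0$ (enclosed in an open $W$ that is $1$-thin at $x_0$), it proves a Claim: near any point of $\partial^1 I_E\setminus\overline W^1$ one can find \emph{another} point of $\partial^1 I_E\setminus\overline W^1$ at a controlled smaller scale at which $E$ has density bounded away from $0$ and $1$. The capacity and boxing estimates there (Steps 2--4) only need the \emph{nonemptiness} of $(\partial^1 I_E\setminus\overline{I_F}^1)\cap A$, not any smallness, which is why the quantitative input from $F$ alone suffices; the inherently non-persistent nature of ``balanced density at one scale'' is then handled by iterating the Claim and passing to the limit point, which lies in $\partial^*E\setminus W$ and yields the contradiction. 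This iteration-and-limit mechanism (or some substitute for it) is exactly what your proposal is missing, and without it the final capacity estimate you need is, as far as I can see, equivalent to the theorem itself.
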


Note that by Remark \ref{rmk:boundaries}, the above can be 
written as $\overline{\partial^0 I_E}^1=\partial^1 I_E$, showing that the result
describes the interplay between the measure topology and the $1$-fine topology.
It is natural to ask which other sets and topologies would satisfy an analogous property, but we will not pursue this problem here.
Previously, properties of the measure topology and fine topologies
have been studied in the monograph \cite{LMZ}.

\begin{proof}
By \eqref{eq:meas theor bdry part of fine boundary} we have
\[
\overline{\partial^*E}^1\subset \overline{\partial^1 I_E}^1=\partial^1 I_E,
\]
where the last equality follows from the fact that boundaries
are closed sets in every topology.
Thus we only need to show that $\overline{\partial^*E}^1\supset\partial^1 I_E$.
Let $x_0\in \partial^1 I_E$ and let $U\ni x_0$ be a $1$-finely open set. We need to show that
$\partial^*E\cap U\neq \emptyset$.
By \eqref{eq:existence of thin open set} there exists an open set $W\supset X\setminus U$ that is $1$-thin at $x_0$.
Since $\overline{W}^1=W\cup b_1 W=b_1 W$
by \eqref{eq:characterization of fine closure}, we have
$x_0\notin \overline{W}^1$.
We will show that $\partial^*E\setminus W\neq\emptyset$; suppose that instead 
$\partial^*E\setminus W=\emptyset$.
\paragraph{Claim.}
Let $x\in \partial^1 I_E\setminus \overline{W}^1$ and $s_1>0$. Then
there exists $y\in B(x,s_1)\cap\partial^1 I_E\setminus \overline{W}^1$ and $0<s_2\le s_1/2$ such that
\[
\frac{1}{2C_d^{\lceil\log_2(3\lambda)\rceil+1}}\le \frac{\mu(E\cap B(y,s_2))}{\mu(B(y,s_2))}\le 1-\frac{1}{2C_d^{\lceil\log_2(3\lambda)\rceil}},
\]
where $\lceil a\rceil$ is the smallest integer at least $a\in\R$.
\paragraph{Proof of claim:}
\subparagraph{Step 1.}
We can assume that $x\in O_E$; the case $x\in I_E$ is handled analogously
(recall that $\partial^1 I_E=\partial^1 O_E$ from Remark \ref{rmk:boundaries}).
Since $x\in\partial^1 I_E$, by \eqref{eq:characterization of fine boundary}
we have
\begin{equation}\label{eq:thickness of IE}
\limsup_{r\to 0}r\frac{\rcapa_1(I_E\cap B(x,r),B(x,2r))}{\mu(B(x,r))}>0.
\end{equation}
Since $x$ belongs to the $1$-finely open set $X\setminus \overline{W}^1$, we have
\[
\lim_{r\to 0}r\frac{\rcapa_1(\overline{W}^1 \cap B(x,r),B(x,2r))}{\mu(B(x,r))}=0.
\]
We apply Corollary \ref{cor:weak Cartan property} to find $R>0$ and
$F\subset X$ such that $I_F\supset \overline{W}^1\cap B(x,R)$ and
$I_F$ is $1$-thin at $x$.
Then by \eqref{eq:variational capacity of fine closure}, also
\[
\lim_{r\to 0}r\frac{\rcapa_1(\overline{I_F}^1 \cap B(x,r),B(x,2r))}{\mu(B(x,r))}=0.
\]
Combining this with \eqref{eq:thickness of IE},
by subadditivity of the variational $1$-capacity we get
\begin{equation}\label{eq:density of IE minus IF}
\limsup_{r\to 0}r\frac{\rcapa_1((I_E\setminus
	\overline{I_F}^1)\cap B(x,r),B(x,2r))}{\mu(B(x,r))}>0.
\end{equation}
According to Corollary \ref{cor:weak Cartan property}, the set $F$ also satisfies
\[
\lim_{r\to 0}r\frac{P(F,B(x,r))}{\mu(B(x,r))}=0.
\]
Thus by \eqref{eq:perimeter and measure of 1boundary are comparable}
and the doubling property of $\mu$,
\begin{equation}\label{eq:density of partial IF}
\lim_{r\to 0}r\frac{\mathcal H(\partial^1 I_F\cap B(x,2r))}{\mu(B(x,r))}=0.
\end{equation}
Combining the fact that $x\in O_E$ with \eqref{eq:density of IE minus IF}
and \eqref{eq:density of partial IF}, we find a number $a>0$ and a radius
\begin{equation}\label{eq:choice of r0}
0<r_f\le \frac{\min\{R,s_1\}}{2}
\end{equation}
such that
\begin{equation}\label{eq:small portion of E}
\frac{\mu(E\cap B(x,2r_f))}{\mu(B(x,2r_f))}\le \frac{1}{2C_d^{\lceil \log_2(60\lambda) \rceil}}
\end{equation}
and
\begin{equation}\label{eq:large IE minus IF}
r_f\frac{\rcapa_1((I_E\setminus\overline{I_F}^1)\cap B(x,r_f)
	,B(x,2r_f))}{\mu(B(x,r_f))}>a
\end{equation}
and
\begin{equation}\label{eq:small partial IF}
r_f\frac{\mathcal H(\partial^1 I_F\cap B(x,2r_f))}{\mu(B(x,r_f))}< \frac{a}{16 C_d^{\lceil\log_2(10\lambda)\rceil+2}C_P}.
\end{equation}
\subparagraph{Step 2.}
Let $D$ consist of all points 
$z\in B(x,r_f)\setminus \overline{I_F}^1$
for which there exists a radius $0<t\le  (10 \lambda)^{-1}r_f $ such that
\[
\frac{\mu(F\cap B(z,t))}{\mu(B(z,t))}> \frac{1}{4 C_d}.
\]
Consider $z\in D$ and the corresponding radius $t$. Since $z\notin \overline{I_F}^1\supset I_F\cup\partial^*F$
(recall \eqref{eq:meas theor bdry part of fine boundary}),
we have
\[
\lim_{r\to 0}\frac{\mu(F\cap B(z,r))}{\mu(B(z,r))}=0.
\]
Take the smallest $k=0,1,\ldots$ such that
\begin{equation}\label{eq:choice of k}
\frac{\mu(F\cap B(z,2^{-k}t))}{\mu(B(z,2^{-k}t))}\le\frac{1}{2}.
\end{equation}
If $k=0$, let $r_z:=t$ so that
\[
\frac{1}{4C_d}<\frac{\mu(F\cap B(z,r_z))}{\mu(B(z,r_z))}\le\frac{1}{2}.
\]
If $k\ge 1$, let $r_z:=2^{-k+1}t$, and then
\[
\frac{\mu(F\cap B(z,r_z))}{\mu(B(z,r_z))}>\frac{1}{2}
\]
and
\begin{align*}
\frac{\mu(F\cap B(z,r_z))}{\mu(B(z,r_z))}
&=\frac{\mu(F\cap B(z,2^{-k+1}t))}{\mu(B(z,2^{-k+1}t))}\\
&\le \frac{\mu(B(z,2^{-k+1}t))-\mu(B(z,2^{-k}t)\setminus F)}{\mu(B(z,2^{-k+1}t))}\\
&\le \frac{\mu(B(z,2^{-k+1}t))-\mu(B(z,2^{-k}t))/2}{\mu(B(z,2^{-k+1}t))}\quad\textrm{by }\eqref{eq:choice of k}\\
&\le 1-\frac{1}{2C_d}.
\end{align*}
Thus in both cases, we have $r_z\le (10 \lambda)^{-1}r_f$ and
\[
\frac{1}{4C_d}<\frac{\mu(F\cap B(z,r_z))}{\mu(B(z,r_z))}\le 1-\frac{1}{2C_d}.
\]
By the relative isoperimetric inequality
\eqref{eq:relative isoperimetric inequality for fine boundary} we now obtain
\begin{equation}\label{eq:condition on balls z rz and IF}
\mu(B(z,r_z)) \le 8 C_d^2 C_P r_z
\mathcal H(\partial^1 I_F\cap B(x,\lambda r_z)).
\end{equation}
Performing the same for every $z\in D$, we obtain a covering $\{B(z,\lambda r_z)\}_{z\in D}$.
By the $5$-covering theorem, we can extract a countable collection
$\{B_j=B(z_j,r_j)\}_{j=1}^{\infty}$ such that
the balls $\lambda B_j$ are pairwise disjoint and
$D\subset \bigcup_{j=1}^{\infty}5\lambda B_j$.
For each $j\in\N$, define the Lipschitz function
\[
\eta_j:=\max\left\{0,1-\frac{\dist(\cdot,5\lambda B_j)}{5\lambda r_j}\right\},
\]
so that
$\eta_j=1$ on $5\lambda B_j$, $\eta_j=0$ outside $10\lambda B_j$,
and the minimal $1$-weak upper gradient satisfies
$g_{\eta_j}\le (5\lambda r_j)^{-1}\ch_{10\lambda B_j}$ (see \cite[Corollary 2.21]{BB}).
Moreover, $r_j\le (10\lambda)^{-1}r_f$ and so
$\eta_j\in N_0^{1,1}(B(x,2r_f))$ for all $j\in\N$.
Now we have
\begin{align*}
\rcapa_1(D,B(x,2r_f))
&\le \rcapa_1\left(\bigcup_{j=1}^{\infty}5\lambda B_j,B(x,2r_f)\right)\\
&\le \sum_{j=1}^{\infty}\rcapa_1\left(5\lambda B_j,B(x,2r_f)\right)\\
&\le \sum_{j=1}^{\infty}\int_X g_{\eta_j}\,d\mu\\
&\le \sum_{j=1}^{\infty}\frac{\mu(10\lambda B_j)}{5\lambda r_j}\\
&\le C_d^{\lceil\log_2(10\lambda)\rceil}\sum_{j=1}^{\infty}\frac{\mu(B_j)}{r_j}\\
&\le 8C_d^{\lceil\log_2(10\lambda)\rceil+2}C_P\sum_{j=1}^{\infty}\mathcal H(\partial^1 I_F\cap \lambda B_j)\quad\textrm{by }
\eqref{eq:condition on balls z rz and IF}\\
&\le 8C_d^{\lceil\log_2(10\lambda)\rceil+2}C_P\mathcal H(\partial^1 I_F\cap B(x,2r_f)).
\end{align*}
Thus by \eqref{eq:small partial IF},
\[
r_f\frac{\rcapa_1(D,B(x,2r_f))}{\mu(B(x,r_f))}<\frac{a}{2}
\]
and so
\begin{equation}\label{eq:cap estimate for IE IF and D}
\begin{split}
& r_f\frac{\rcapa_1((I_E\setminus
	(\overline{I_F}^1\cup D))\cap B(x,r_f),B(x,2r_f))}{\mu(B(x,r_f))}\\
	&\qquad\ge r_f\frac{\rcapa_1((I_E\setminus
	\overline{I_F}^1)\cap B(x,r_f),B(x,2r_f))-\rcapa_1(D,B(x,2r_f))}{\mu(B(x,r_f))}\\
	&\qquad>\frac{a}{2}
\end{split}
\end{equation}
by \eqref{eq:large IE minus IF}.

\subparagraph{Step 3.}
Now consider $z\in (I_E\setminus (\overline{I_F}^1\cup D))\cap B(x,r_f)$. We have
\[
\lim_{r\to 0}\frac{\mu(E\cap B(z,r))}{\mu(B(z,r))}=1,
\]
and so we can choose $0<t\le (20\lambda)^{-1}r_f$ such that
\[
\frac{\mu(E\cap B(z,t))}{\mu(B(z,t))}>\frac{1}{2}.
\]
Note also that for any $r\in [(20\lambda)^{-1}r_f,(10\lambda)^{-1}r_f]$,
we have $B(x,2r_f)\subset B(z,60\lambda r)$ and so
\[
\frac{\mu(E\cap B(z,r))}{\mu(B(z,r))}\le 
C_d^{\lceil \log_2(60\lambda) \rceil} \frac{\mu(E\cap B(x,2r_f))}{\mu(B(x,2r_f))}\le \frac{1}{2}
\]
by \eqref{eq:small portion of E}.
Set $r_z:=2^{k}t$ for the smallest $k\in\N$ such that
\[
\frac{\mu(E\cap B(z,2^{k}t))}{\mu(B(z,2^{k}t))}\le \frac{1}{2}.
\]
Then we have $0<r_z\le (10\lambda)^{-1}r_f$ and
\begin{equation}\label{eq:proportion of E in ball}
\frac{1}{2C_d}< \frac{\mu(E\cap B(z,r_z))}{\mu(B(z,r_z))}\le \frac{1}{2}
\end{equation}
and since $z\notin D$,
\[
\frac{1}{4C_d}\le \frac{\mu((E\setminus F)\cap B(z,r_z))}{\mu(B(z,r_z))}\le \frac{1}{2}.
\]
Then by the relative isoperimetric inequality
\eqref{eq:relative isoperimetric inequality for fine boundary}, we have
\begin{equation}\label{eq:rel isop ineq for IE minus IF}
\frac{\mu(B(z,r_z))}{r_z}\le 8C_d^2 C_P\mathcal H(\partial^1 I_{E\setminus F}\cap B(z,\lambda r_z)).
\end{equation}
(Note that the right-hand side could be infinity.)
Let
\begin{equation}\label{eq:definition of A}
A:=\bigcup_{z\in (I_E\setminus (\overline{I_F}^1\cup D))\cap B(x,r_f)}
B(z,\lambda r_z)\subset B(x,2r_f).
\end{equation}
Consider the covering $\{B(z,\lambda r_z)\}_{z\in (I_E\setminus (\overline{I_F}^1\cup D))\cap B(x,r_f)}$.
By the $5$-covering theorem, we can extract a countable collection
$\{B_j=B(z_j,r_j)\}_{j=1}^{\infty}$ such that the balls
$\lambda B_j$ are pairwise disjoint and
$(I_E\setminus (\overline{I_F}^1\cup D))\cap B(x,r_f)
\subset \bigcup_{j=1}^{\infty}5\lambda B_j$.
Just as in the previous step,
for each $j\in\N$ define the Lipschitz function
\[
\eta_j:=\max\left\{0,1-\frac{\dist(\cdot,5\lambda B_j)}{5\lambda r_j}\right\},
\]
so that
$\eta_j=1$ on $5\lambda B_j$, $\eta_j=0$ outside $10\lambda B_j$,
and the minimal $1$-weak upper gradient satisfies $g_{\eta_j}\le (5\lambda r_j)^{-1}\ch_{10\lambda B_j}$.
Moreover, $r_j\le (10\lambda)^{-1}r_f$ and so $\eta_j\in N_0^{1,1}(B(x,2r_f))$ for all $j\in\N$. Now we have
\begin{equation}\label{eq:cap estimate for IE IF D 2}
\begin{split}
&\rcapa_1((I_E\setminus
	(\overline{I_F}^1\cup D))\cap B(x,r_f),B(x,2r_f))\\
&\qquad\qquad\le \rcapa_1\left(\bigcup_{j=1}^{\infty}5\lambda B_j,B(x,2r_f)\right)\\
&\qquad\qquad\le \sum_{j=1}^{\infty}\rcapa_1\left(5\lambda B_j,B(x,2r_f)\right)\\
&\qquad\qquad\le \sum_{j=1}^{\infty}\int_X g_{\eta_j}\,d\mu\\
&\qquad\qquad\le \sum_{j=1}^{\infty}\frac{\mu(10\lambda B_j)}{5\lambda r_j}\\
&\qquad\qquad\le C_d^{\lceil\log_2(10\lambda)\rceil}\sum_{j=1}^{\infty}\frac{\mu(B_j)}{r_j}\\
&\qquad\qquad\le 8C_d^{\lceil\log_2(10\lambda)\rceil+2}C_P\sum_{j=1}^{\infty}\mathcal H(\partial^1 I_{E\setminus F}\cap \lambda B_j)\quad\textrm{by }\eqref{eq:rel isop ineq for IE minus IF}\\
&\qquad\qquad\le 8C_d^{\lceil\log_2(10\lambda)\rceil+2}C_P\mathcal H(\partial^1 I_{E\setminus F}\cap A).
\end{split}
\end{equation}

\subparagraph{Step 4.}
Next we show that
\begin{equation}\label{eq:boundary of IE IF}
\partial^1 I_{E\setminus F}\subset
(\partial^1 I_E\setminus \overline{I_F}^1)\cup \partial^1 I_F.
\end{equation}
To see this, note that $X\setminus \overline{I_F}^1\subset O_F$
(recall \eqref{eq:meas theor bdry part of fine boundary})
and so $I_{E\setminus F}\setminus \overline{I_F}^1=I_{E}\setminus \overline{I_F}^1$.
Since $X\setminus \overline{I_F}^1$ is a $1$-finely open set,
it follows that $\partial^1 I_{E\setminus F}\setminus \overline{I_F}^1
=\partial^1 I_E\setminus \overline{I_F}^1$.
Moreover, $I_{E\setminus F}\cap \fint I_F=\emptyset$
and $\fint I_F$ is $1$-finely open, and so
$\partial^1 I_{E\setminus F}\cap \fint I_F
=\emptyset$.
From these, \eqref{eq:boundary of IE IF} follows.

By \eqref{eq:cap estimate for IE IF and D} and
\eqref{eq:cap estimate for IE IF D 2},
\[
r_f\frac{\mathcal H(\partial^1 I_{E\setminus F}\cap A)}{\mu(B(x,r_f))}\ge 
\frac{a}{16C_d^{\lceil\log_2(10\lambda)\rceil+2}C_P}.
\]
Now by first using \eqref{eq:boundary of IE IF} and the fact that
$A\subset B(x,2r_f)$ (recall \eqref{eq:definition of A}),
and then \eqref{eq:small partial IF} and the above inequality, we get
\[
r_f\frac{\mathcal H((\partial^1 I_E\setminus \overline{I_F}^1)\cap A)}{\mu(B(x,r_f))}
\ge r_f\frac{\mathcal H(\partial^1 I_{E\setminus F}\cap A)-
\mathcal H(\partial^1 I_F\cap B(x,2r_f))}{\mu(B(x,r_f))}>0.
\]
In particular, there exists a point $y\in (\partial^1 I_E\setminus \overline{I_F}^1)\cap A$.
Recall from  \eqref{eq:choice of r0} that
$0<r_f\le \min\{R,s_1\}/2$, and so $\overline{W}^1\cap B(x,2r_f)\subset I_F\cap B(x,2r_f)$.
Thus
\[
y\in (\partial^1 I_E\setminus \overline{W}^1)\cap B(x,2r_f)\subset 
(\partial^1 I_E\setminus \overline{W}^1)\cap B(x,s_1),
\]
as desired.
By the definition of $A$ (recall \eqref{eq:proportion of E in ball},
\eqref{eq:definition of A})
there is a point $z\in X$ and a radius $0<r_z\le (10\lambda)^{-1}r_f$ such that
$y\in B(z,\lambda r_z)$ and
\[
\frac{1}{2C_d}\le \frac{\mu(E\cap B(z,r_z))}{\mu(B(z,r_z))}\le \frac{1}{2}.
\]
For $s_2:=2\lambda r_z\le s_1/2$ we then have
$B(z,r_z)\subset B(y,s_2)\subset B(z,3\lambda r_z)$, and so
\[
\frac{1}{2C_d^{\lceil\log_2(3\lambda)\rceil+1}}
\le \frac{\mu(E\cap B(y,s_2))}{\mu(B(y,s_2))}
\le \frac{\mu(B(y,s_2))-\mu(B(z,r_z))/2}{\mu(B(y,s_2))}
\le 1-\frac{1}{2C_d^{\lceil\log_2(3\lambda)\rceil}}.
\]
This completes the proof of the claim.\\

Define $r_0=1$. We use the claim repeatedly, first with the choice $x=x_0$
and $s_1=r_0$, to find a sequence of points
$x_j\in  B(x_{j-1},r_{j-1})\cap \partial^1 
I_E\setminus \overline{W}^1$
and a sequence of numbers $0<r_j\le r_{j-1}/2$ such that
\[
\min\left\{\frac{\mu(B(x_j,r_j)\cap E)}{\mu(B(x_j,r_j))},
\frac{\mu(B(x_j,r_j)\setminus E)}{\mu(B(x_j,r_j))}\right\}
\ge \frac{1}{2C_d^{\lceil\log_2(3\lambda)\rceil+1}}
\]
for all $j\in\N$.
By completeness of the space and the fact that $W$ is open,
we find $x\in X\setminus W$ such that
$x_j\to x$. For each $j\in\N$ we have
\[
d(x,x_j)\le \sum_{k=j}^{\infty}d(x_k,x_{k+1})
\le \sum_{k=j}^{\infty}r_k\le 2r_j.
\]
Thus $B(x_j,r_j)\subset B(x,3r_j)\subset B(x_j,5r_j)$ for all $j\in\N$, and so
\[
\frac{\mu(B(x,3r_j)\cap E)}{\mu(B(x,3r_j))}
\ge \frac{\mu(B(x_j,r_j)\cap E)}{\mu(B(x,3r_j))}
\ge \frac{1}{C_d^3}\frac{\mu(B(x_j,r_j)\cap E)}{\mu(B(x_j,r_j))}
\ge \frac{1}{2C_d^{\lceil\log_2(3\lambda)\rceil+4}}
\]
and similarly
\[
\frac{\mu(B(x,3r_j)\setminus E)}{\mu(B(x,3r_j))}
\ge \frac{\mu(B(x_j,r_j)\setminus E)}{\mu(B(x,3r_j))}
\ge \frac{1}{C_d^3}\frac{\mu(B(x_j,r_j)\setminus E)}{\mu(B(x_j,r_j))}
\ge \frac{1}{2C_d^{\lceil\log_2(3\lambda)\rceil+4}}.
\]
Thus
\[
\limsup_{r\to 0}\frac{\mu(B(x,r)\cap E)}{\mu(B(x,r))}>0\quad\textrm{and}\quad
\limsup_{r\to 0}\frac{\mu(B(x,r)\setminus E)}{\mu(B(x,r))}>0,
\]
and so $x\in \partial^*E\setminus W$, which proves the theorem by the discussion
in the first paragraph of the proof.
\end{proof}

By using another argument involving Lipschitz cutoff functions, it is easy to see that for any $A\subset X$ and any ball $B(x,r)$,
\begin{equation}\label{eq:capacity and Hausdorff measure}
\rcapa_1(A\cap B(x,r),B(x,2r))\le C_d \mathcal H(A\cap B(x,r)).
\end{equation}

\begin{theorem}\label{thm:fine boundary when meas th boundary is finite}
	Let $\Om\subset X$ be open and let $E\subset X$ be $\mu$-measurable with
	$\mathcal H(\partial^*E\cap \Om)<\infty$.
	Then $\mathcal H((\partial^1 I_E\setminus \partial^*E)\cap \Om)=0$.
\end{theorem}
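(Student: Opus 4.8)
The plan is to derive Theorem~\ref{thm:fine boundary when meas th boundary is finite} from Theorem~\ref{thm:fine closure gives fine boundary} and the capacity estimate \eqref{eq:capacity and Hausdorff measure} by a density and covering argument. Replacing $E$ by a Borel representative (which alters none of the sets $I_E,O_E,\partial^*E,\partial^1 I_E$ nor $P(E,\cdot)$), we may assume that $\partial^*E$ is a Borel set.

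First I would note that $\partial^1 I_E\setminus\partial^*E\subset b_1(\partial^*E)$: by Theorem~\ref{thm:fine closure gives fine boundary} together with the description \eqref{eq:characterization of fine closure} of the $1$-fine closure, $\partial^1 I_E=\overline{\partial^*E}^1=\partial^*E\cup b_1(\partial^*E)$. Set $G:=(\partial^1 I_E\setminus\partial^*E)\cap\Om$, so that $G\cap\partial^*E=\emptyset$ and every $x\in G$ lies in $b_1(\partial^*E)\cap\Om$, i.e.\ $\partial^*E$ is not $1$-thin at $x$. For $x\in\Om$ and $0<r<\dist(x,X\setminus\Om)$ we have $B(x,r)\subset\Om$ and hence $\partial^*E\cap B(x,r)=(\partial^*E\cap\Om)\cap B(x,r)$; applying \eqref{eq:capacity and Hausdorff measure} with $A=\partial^*E\cap\Om$ and taking $\limsup_{r\to0}$ in
\[
r\,\frac{\rcapa_1(\partial^*E\cap B(x,r),B(x,2r))}{\mu(B(x,r))}\le C_d\,r\,\frac{\mathcal H((\partial^*E\cap\Om)\cap B(x,r))}{\mu(B(x,r))},
\]
the non-$1$-thinness of $\partial^*E$ at $x$ forces
\[
\Theta(x):=\limsup_{r\to0}\,r\,\frac{\mathcal H((\partial^*E\cap\Om)\cap B(x,r))}{\mu(B(x,r))}>0\qquad\text{for every }x\in G.
\]

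Next I would carry out a $5r$-covering argument. Since $\mathcal H(\partial^*E\cap\Om)<\infty$ and $X$ is proper, the Borel measure $B\mapsto\mathcal H(\partial^*E\cap\Om\cap B)$ is a finite Radon measure, hence outer regular; as $G$ is contained in the Borel set $\Om\setminus\partial^*E$, which has measure $\mathcal H(\partial^*E\cap\Om\cap(\Om\setminus\partial^*E))=0$, for each $\eps>0$ there is an open $U\supset G$ with $\mathcal H(\partial^*E\cap\Om\cap U)<\eps$. Write $G=\bigcup_{k\in\N}G_k$ with $G_k:=\{x\in G:\Theta(x)>1/k\}$; it suffices to prove $\mathcal H(G_k)=0$ for every $k$. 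Fixing $k$, $\eps$, $U$ as above and any $R>0$, for each $x\in G_k$ choose a radius $r_x$ with $0<r_x<R$, $B(x,r_x)\subset U$, and $\mu(B(x,r_x))/r_x<k\,\mathcal H((\partial^*E\cap\Om)\cap B(x,r_x))$. By the $5r$-covering theorem there is a countable pairwise disjoint subfamily $\{B(x_i,r_{x_i})\}_i$ with $G_k\subset\bigcup_i B(x_i,5r_{x_i})$, whence, using the doubling property, disjointness, and $B(x_i,r_{x_i})\subset U$,
\[
\mathcal H_{5R}(G_k)\le\sum_i\frac{\mu(B(x_i,5r_{x_i}))}{5r_{x_i}}\le\frac{C_d^{\,3}k}{5}\sum_i\mathcal H((\partial^*E\cap\Om)\cap B(x_i,r_{x_i}))\le\frac{C_d^{\,3}k}{5}\,\mathcal H(\partial^*E\cap\Om\cap U)<\frac{C_d^{\,3}k}{5}\,\eps.
\]
Letting $R\to0$ gives $\mathcal H(G_k)\le C_d^{\,3}k\eps/5$, and then $\eps\to0$ gives $\mathcal H(G_k)=0$; summing over $k$ yields $\mathcal H(G)=0$, as desired.

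The main obstacle is really the input from Theorem~\ref{thm:fine closure gives fine boundary}, namely the inclusion $\partial^1 I_E\setminus\partial^*E\subset b_1(\partial^*E)$, together with its translation via \eqref{eq:capacity and Hausdorff measure} into the statement that $\partial^*E\cap\Om$ has positive upper codimension-one density at every such point; once this is in place the remainder is a routine outer-regularity plus covering argument, the only mild technical point being the Radon property of $\mathcal H$ restricted to $\partial^*E\cap\Om$, which follows from its finiteness and the properness of $X$.
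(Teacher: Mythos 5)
Your proof is correct and is essentially the paper's argument run in the contrapositive: the paper first shows, via the standard covering lemma it cites from \cite{KKST-P}, that $\mathcal H$-a.e.\ point of $\Om\setminus\partial^*E$ has vanishing codimension-one density of $\partial^*E$, hence by \eqref{eq:capacity and Hausdorff measure} lies in a $1$-finely open set disjoint from $\partial^*E$, and then applies Theorem \ref{thm:fine closure gives fine boundary}, whereas you apply Theorem \ref{thm:fine closure gives fine boundary} first to place $\partial^1 I_E\setminus\partial^*E$ inside $b_1(\partial^*E)$, use the same estimate \eqref{eq:capacity and Hausdorff measure} to turn non-thinness into positive upper density, and then prove the covering/density lemma by hand via outer regularity and the $5r$-covering theorem. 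Same ingredients and estimates, so this is the same approach with the cited lemma written out explicitly.
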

\begin{proof}
By a standard covering argument (see e.g. the proof of \cite[Lemma 2.6]{KKST-P})
we find that
\[
\lim_{r\to 0}r\frac{\mathcal H(\partial^*E\cap B(x,r))}{\mu(B(x,r))}=0
\]
for all $x\in \Om\setminus (\partial^*E\cup N)$, with $\mathcal H(N)=0$. Then by \eqref{eq:capacity and Hausdorff measure}, also
\begin{align*}
\limsup_{r\to 0}r\frac{\rcapa_1((\partial^*E\cup N)\cap B(x,r),B(x,2r))}{\mu(B(x,r))}
&\le C_d\limsup_{r\to 0}r\frac{\mathcal H((\partial^*E\cup N)\cap B(x,r))}{\mu(B(x,r))}\\
& = C_d\limsup_{r\to 0}r\frac{\mathcal H(\partial^*E\cap B(x,r))}{\mu(B(x,r))}\\
&=0
\end{align*}
for all $x\in \Om\setminus (\partial^*E\cup N)$.
Thus $\Om\setminus (\partial^*E\cup N)$ is a $1$-finely open set.
Now by Theorem \ref{thm:fine closure gives fine boundary},
$\partial^1 I_E \cap (\Om\setminus (\partial^*E\cup N))=\emptyset$ and the result follows.	
\end{proof}

Now we can prove our main theorem.

\begin{proof}[Proof of Theorem \ref{thm:Federers characterization}]
By Theorem \ref{thm:fine boundary when meas th boundary is finite} we have
$\mathcal H(\partial^1 I_E\cap \Om)<\infty$. Then by Theorem \ref{thm:Fed style characterization} we
have $P(E,\Om)<\infty$.
\end{proof}

\section{Some consequences and discussion}\label{sec:consequences}

Now we can state Federer's characterization in metric spaces
as follows.

\begin{corollary}
Let $\Om\subset X$ be open and let $E\subset X$ be $\mu$-measurable.
Then $P(E,\Om)<\infty$ if and only if $\mathcal H(\partial^*E\cap \Om)<\infty$.
\end{corollary}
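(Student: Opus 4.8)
The plan is simply to combine the two implications, both of which are now in hand. For the ``only if'' direction I would start from the assumption $P(E,\Om)<\infty$ and invoke the structure theorem \eqref{eq:def of theta}: since $\theta_E\ge\alpha>0$ on all of $\partial^*E$, taking $A=\Om$ gives $P(E,\Om)=\int_{\partial^*E\cap\Om}\theta_E\,d\mathcal H\ge \alpha\,\mathcal H(\partial^*E\cap\Om)$, whence $\mathcal H(\partial^*E\cap\Om)\le \alpha^{-1}P(E,\Om)<\infty$. This is Ambrosio's half of the characterization and needs nothing new; note that \eqref{eq:def of theta} is available precisely because we are assuming $P(E,\Om)<\infty$, so there is no circularity.

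For the ``if'' direction I would observe that the hypothesis $\mathcal H(\partial^*E\cap\Om)<\infty$ is exactly the hypothesis of Theorem \ref{thm:Federers characterization}, whose conclusion is $P(E,\Om)<\infty$. So this direction is immediate from the main result of the paper, and in particular it is unconditional: it does not presuppose that $E$ has locally finite perimeter, nor does it need the structure theorem.

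I do not expect any real obstacle here: the corollary is a packaging statement that records Theorem \ref{thm:Federers characterization} together with \eqref{eq:def of theta} as a clean ``if and only if''. The only point worth a sentence of care is to confirm, as above, that the two halves rely on disjoint inputs and that \eqref{eq:def of theta} is used only in the regime where it has already been established. All the genuine work has been done in Theorem \ref{thm:fine closure gives fine boundary} and Theorem \ref{thm:fine boundary when meas th boundary is finite}, from which Theorem \ref{thm:Federers characterization} was derived.
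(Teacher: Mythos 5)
Your proposal is correct and coincides with the paper's own proof, which likewise obtains the corollary by combining Theorem \ref{thm:Federers characterization} for the ``if'' direction with \eqref{eq:def of theta} (and the lower bound $\theta_E\ge\alpha>0$) for the ``only if'' direction. The extra remarks about the absence of circularity are accurate but not a departure from the paper's argument.
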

\begin{proof}
This follows by combining Theorem \ref{thm:Federers characterization} and
\eqref{eq:def of theta}.
\end{proof}

In general, the sets $\partial^*E$ and $\partial^1 I_E$ can be quite different.

\begin{example}
Let $X=\R$ (unweighted).
Let $\{q_j\}_{j=1}^{\infty}$ be an enumeration of all rational
numbers and
let $E:=\bigcup_{j=1}^{\infty}B(q_j,2^{-j})$. Then
$\mathcal L^1(I_E)\le 2$ and
$\mathcal L^1(\partial^*E)=0$ by Lebesgue's differentiation theorem.
On the other hand, it is straightforward to check that for any
$A\subset \R$,
we have $\partial^1 A=\partial A$. Thus
$\partial^1 I_E=\partial I_E \supset \R\setminus I_E$ and so
$\mathcal L^1(\partial^1 I_E)=\infty$.
\end{example}

In the Euclidean setting, the ``if'' direction of
Federer's characterization is proved
by first showing
that almost every coordinate line intersecting $I_E$ and $O_E$ also intersects
$\partial^*E$, see \cite[Section 4.5.11]{Fed} or \cite[p. 222--]{EvGa}.
Proving this fact relies heavily on the Euclidean structure, and so it is difficult to
generalize to metric spaces. However, we do have the following;
it would be interesting to know if the assumption
$\mathcal H(\partial^*E)<\infty$ can be dropped.

\begin{proposition}
Let $E\subset X$ be $\mu$-measurable and
suppose that $\mathcal H(\partial^*E)<\infty$. Then $1$-almost every curve
 intersecting $I_E$ and $O_E$ also intersects
$\partial^*E$.
\end{proposition}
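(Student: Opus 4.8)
The plan is to deduce this from Theorem \ref{thm:Federers characterization} together with the structure theory of sets of finite perimeter, via the coarea-type relation between the perimeter measure and the Hausdorff measure $\mathcal H\lfloor\partial^*E$. First I would note that the hypothesis $\mathcal H(\partial^*E)<\infty$ gives, by Theorem \ref{thm:Federers characterization}, that $E$ is a set of finite perimeter in $X$, so by \eqref{eq:def of theta} we have $P(E,\cdot)=\int_{\partial^*E\cap\,\cdot}\theta_E\,d\mathcal H$ with $\alpha\le\theta_E\le C_d$. In particular $P(E,\cdot)$ and $\mathcal H\lfloor\partial^*E$ are mutually absolutely continuous finite Radon measures, and $P(E,X\setminus\partial^*E)=0$.

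Next I would invoke the known ``curve'' characterization of the perimeter/total variation in this setting: for $u=\ch_E$ of finite variation, $1$-a.e.\ curve $\gamma$ has the property that $u\circ\gamma$ (suitably interpreted along the arclength parametrization) has essentially bounded variation, and moreover the set of curves along which $\ch_E$ has a ``jump'' that is not accounted for by $\partial^*E$ has zero $1$-modulus. Concretely, the fact we need is that $1$-a.e.\ curve $\gamma$ satisfies: if $\gamma$ meets $I_E$ and $\gamma$ meets $O_E$, then $\gamma$ meets $\partial^*E$. The cleanest route is a boxing-type / modulus estimate: let $\Gamma$ be the family of curves that meet both $I_E$ and $O_E$ but avoid $\partial^*E$. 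On such a curve, moving along the arclength parametrization from a point of $I_E$ to a point of $O_E$, the function $\ch_E$ (in its representative via approximate limits) jumps from $1$ to $0$, but every point of the curve lies in $I_E\cup O_E$ since $\partial^*E$ is avoided; hence there is a ``first'' parameter where the curve leaves $I_E$, and at that parameter the curve is in $O_E$, forcing the curve to contain points of $I_E$ arbitrarily close (in the parameter, hence in $X$) to a point of $O_E$ — i.e.\ the curve passes through a point of $\partial^* E$ (contradiction) unless $I_E$ and $O_E$ are separated along the curve by a single parameter value lying in neither, which is impossible since $I_E$ and $O_E$ are disjoint and their union is $X\setminus\partial^*E$. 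This continuity/connectedness argument along the curve shows $\Gamma$ actually requires the curve to \emph{cross} the perimeter measure a positive amount; then one uses that $P(E,\cdot)$ is a finite measure supported on $\partial^*E$ together with a standard upper gradient / modulus estimate (as in the proof that $g=\theta_E\,d\mathcal H/d\mu$-type densities control curve integrals) to conclude $\mathrm{Mod}_1(\Gamma)=0$.

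To make the last step precise I would argue as follows. For $\ch_E\in\BV(X)$, by the definition of total variation via approximating Newton--Sobolev functions and a diagonal/Fuglede-type argument (as is standard; cf.\ the curve characterizations underlying \cite{A1,M}), there is a sequence $u_i\in N^{1,1}_{\loc}(X)$ with $u_i\to\ch_E$ in $L^1_{\loc}$ and $\int_X g_{u_i}\,d\mu\to\|D\ch_E\|(X)<\infty$, and along $1$-a.e.\ curve $u_i\circ\gamma\to$ a BV function whose total jump is controlled by $\liminf_i\int_\gamma g_{u_i}\,ds$. For a curve $\gamma\in\Gamma$, the endpoints (or rather: some sub-curve joining a point of $I_E$ to a point of $O_E$) force the limit function to have a jump of size $1$, hence $\liminf_i\int_\gamma g_{u_i}\,ds\ge 1$ for all $\gamma\in\Gamma$; but $\sum$-type bounds from $\int_X g_{u_i}\,d\mu\to\|D\ch_E\|(X)$ and the fact that this measure charges only $\partial^*E$, which is avoided by $\Gamma$, pin the relevant curve integrals at $0$, giving $\mathrm{Mod}_1(\Gamma)=0$.

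The main obstacle I expect is the \emph{subcurve/parametrization bookkeeping}: one must carefully pass from ``$\gamma$ meets $I_E$ and $O_E$'' to an honest sub-curve along which $\ch_E$ exhibits a genuine $0$-to-$1$ jump that is detected by the curve-wise BV structure, handling the measure-zero set of ``bad'' curves (those along which $\ch_E$ is not well-behaved or along which the approximate-limit representative disagrees with the $L^1$ limit) and the fact that a curve could oscillate between $I_E$ and $O_E$ infinitely often. The natural fix is the one used throughout this circle of ideas: discard the zero-modulus family of curves on which $\ch_E\circ\gamma$ fails to be (essentially) BV, and on the remaining curves use that an essentially-BV function taking values (in the approximate sense) only in $\{0,1\}$ and attaining both has positive essential variation, which is exactly what the modulus estimate needs.
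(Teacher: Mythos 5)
Your first step is exactly the paper's: from $\mathcal H(\partial^*E)<\infty$ and Theorem \ref{thm:Federers characterization} you get $P(E,X)<\infty$, and then the structure relation \eqref{eq:def of theta}. At that point the paper simply quotes \cite[Corollary 6.4]{LaSh}, which states precisely that for a set of finite perimeter, $1$-a.e.\ curve meeting both $I_E$ and $O_E$ meets $\partial^*E$. You instead try to reprove that fact, and this is where your argument has a genuine gap. The Fuglede-type reasoning you sketch only yields the following: taking $u_i\in N^{1,1}_{\loc}(X)$ with $u_i\to\ch_E$ in $L^1_{\loc}$ and $\int_X g_{u_i}\,d\mu\to P(E,X)$, for $1$-a.e.\ curve $\gamma$ crossing from $I_E$ to $O_E$ one gets $\liminf_i\int_\gamma g_{u_i}\,ds\ge 1$, and hence $\Mod_1(\Gamma)\le\liminf_i\int_X g_{u_i}\,d\mu=P(E,X)<\infty$ --- not $\Mod_1(\Gamma)=0$. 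Your claim that ``the measure charges only $\partial^*E$, which is avoided by $\Gamma$,'' forces the curve integrals to vanish does not follow: the functions $g_{u_i}$ are upper gradients of Sobolev approximations and are not concentrated on $\partial^*E$ (typically their mass spreads over a neighbourhood of the boundary through which every crossing curve must pass), and there is no mechanism for transferring the concentration of the \emph{limit measure} $\Vert D\ch_E\Vert$ on $\partial^*E$ to curvewise integrals of the $g_{u_i}$ along curves that merely avoid the Borel set $\partial^*E$ (which is in general neither closed nor of positive distance from the curve). This degeneration of the weak upper gradient structure is exactly the difficulty that distinguishes $\BV$ from $N^{1,1}$, and it is why the cited result requires different tools.

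The preliminary ``connectedness along the curve'' discussion is also not a proof of anything: $I_E$ and $O_E$ partition $X\setminus\partial^*E$ but are in general neither open nor closed, so a continuous curve can pass from one to the other without touching $\partial^*E$ as far as topology is concerned; ruling this out for $1$-a.e.\ curve is the whole content of the statement. The actual proof of \cite[Corollary 6.4]{LaSh} goes through quasi-(semi)continuity properties of the representatives $\ch_E^{\wedge},\ch_E^{\vee}$ for sets of finite perimeter, established via boxing-inequality and capacitary covering arguments, rather than a direct modulus estimate from approximating upper gradients. So either cite that result, as the paper does, or supply an argument of that type; as written, the final step of your modulus estimate does not close.
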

\begin{proof}
By Theorem \ref{thm:Federers characterization}, $P(E,X)<\infty$. Then the result
follows from \cite[Corollary 6.4]{LaSh}.
\end{proof}

It is reasonable to expect Federer's characterization
to find various applications especially in the metric setting,
where certain tools of Euclidean $\BV$ theory, such as the
Gauss-Green theorem, are not available.
One likely application is in the study of
images of sets of finite perimeter under
quasiconformal mappings (see \cite{Kel} for the Euclidean case),
since such mappings are known to preserve the
measure-theoretic boundary (see \cite[Theorem 6.1]{KoMaSh}).

Now we discuss some existing applications.
From the characterization it follows that
the space supports the following \emph{strong relative
isoperimetric inequality} introduced in \cite{KKST};
compare this with \eqref{eq:relative isoperimetric inequality}
and \eqref{eq:relative isoperimetric inequality for fine boundary}.

\begin{corollary}
For every ball $B(x,r)$ and every $\mu$-measurable $E\subset X$, we have
\[
\min\{\mu(B(x,r)\cap E),\,\mu(B(x,r)\setminus E)\}\le 2 C_P C_d r
\mathcal H(\partial^*E\cap B(x,\lambda r)).
\]
\end{corollary}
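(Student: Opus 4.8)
The final statement to prove is the strong relative isoperimetric inequality:
\[
\min\{\mu(B(x,r)\cap E),\,\mu(B(x,r)\setminus E)\}\le 2 C_P C_d r
\mathcal H(\partial^*E\cap B(x,\lambda r)).
\]

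The plan is to derive this directly by combining the ordinary relative isoperimetric inequality \eqref{eq:relative isoperimetric inequality} with the structure theorem \eqref{eq:def of theta} and the newly established Theorem \ref{thm:Federers characterization}. First I would dispose of the trivial case: if $\mathcal H(\partial^*E\cap B(x,\lambda r))=\infty$, there is nothing to prove, so assume it is finite. Since $B(x,\lambda r)$ is open, Theorem \ref{thm:Federers characterization} gives $P(E,B(x,\lambda r))<\infty$. Now apply \eqref{eq:relative isoperimetric inequality}, which reads $\min\{\mu(B(x,r)\cap E),\mu(B(x,r)\setminus E)\}\le 2C_P r\, P(E,B(x,\lambda r))$.

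The next step is to bound $P(E,B(x,\lambda r))$ by $C_d\,\mathcal H(\partial^*E\cap B(x,\lambda r))$. This is immediate from \eqref{eq:def of theta}: applying it with the Borel set $A=B(x,\lambda r)\subset B(x,\lambda r)$ (or, to be safe about the open-set requirement in \eqref{eq:def of theta}, one may note the representation holds for perimeter in any open set where $P(E,\cdot)<\infty$ and then restrict to Borel subsets), we get
\[
P(E,B(x,\lambda r))=\int_{\partial^*E\cap B(x,\lambda r)}\theta_E\,d\mathcal H\le C_d\,\mathcal H(\partial^*E\cap B(x,\lambda r)),
\]
since $\theta_E\le C_d$ pointwise. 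Substituting this into the relative isoperimetric inequality yields the claimed bound with constant $2C_PC_d$.

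I do not anticipate a genuine obstacle here — the corollary is essentially a repackaging of Theorem \ref{thm:Federers characterization} together with two facts already recorded in the preliminaries. The only point requiring a little care is making sure \eqref{eq:def of theta} is applied legitimately: it is stated for an open set $\Omega$ with $P(E,\Omega)<\infty$ and Borel $A\subset\Omega$, so one takes $\Omega=B(x,\lambda r)$ (open) and $A=B(x,\lambda r)$ itself, which is Borel. With $P(E,B(x,\lambda r))<\infty$ secured from the main theorem, every ingredient is in place and the inequality follows by direct substitution.
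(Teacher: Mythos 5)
Your proof is correct and follows exactly the paper's own argument: assume the right-hand side is finite, invoke Theorem \ref{thm:Federers characterization} to get $P(E,B(x,\lambda r))<\infty$, and then combine the relative isoperimetric inequality \eqref{eq:relative isoperimetric inequality} with the bound $\theta_E\le C_d$ from \eqref{eq:def of theta}. No further comment is needed.
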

\begin{proof}
We can assume that the right-hand side is finite. By Theorem \ref{thm:Federers characterization}
we know that $P(E,B(x,\lambda r))<\infty$, and now the result follows by combining the relative
isoperimetric inequality \eqref{eq:relative isoperimetric inequality} and
\eqref{eq:def of theta}.
\end{proof}

In \cite{KKST} the authors worked with the same standing assumptions
as we do in the current paper,
but additionally they assumed that the space supports the above
strong relative isoperimetric inequality.
Now we know that this does not need to be separately assumed,
and the following theorem
(Theorem 1.1 of \cite{KKST}) holds under our standing assumptions
(completeness, doubling, and Poincar\'e).

\begin{theorem}
Let $\Om\subset X$ be a bounded open set and let $u\in N^{1,p}(\Om)$ with
$1\le p<\infty$. Then $u\in N_0^{1,p}(\Om)$ if and only if
\[
\lim_{r\to 0}\frac{1}{\mu(B(x,r))}\int_{\Om\cap B(x,r)}|u|\,d\mu=0
\]
for $\capa_p$-almost every $x\in \partial\Om$.
\end{theorem}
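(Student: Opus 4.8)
The plan is to recognize that this statement is exactly \cite[Theorem 1.1]{KKST}, which is proved there under our three standing assumptions \emph{together with} the strong relative isoperimetric inequality; since the Corollary just established shows that inequality to be a consequence of completeness, doubling, and the $(1,1)$-Poincar\'e inequality, the proof of \cite{KKST} applies without modification. So the only thing really needed is to point this out; for completeness I sketch the two directions of that argument.

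For the ``only if'' direction I would extend $u\in N_0^{1,p}(\Om)$ by zero to $\tilde u\in N^{1,p}(X)$ and use that Newton-Sobolev functions are $\capa_p$-quasicontinuous and, for $\capa_p$-almost every $x\in X$, Lebesgue points in the strong sense $\lim_{r\to0}\vint{B(x,r)}|\tilde u-\tilde u(x)|\,d\mu=0$ (the proof of this Lebesgue-point property in the endpoint case $p=1$ is itself part of the machinery used in \cite{KKST}). Since $\Om$ is open, $\partial\Om\subset X\setminus\Om$, where $\tilde u\equiv 0$, so at such boundary Lebesgue points the value is $0$; discarding the part of each ball lying outside $\Om$ only decreases the mean, whence $\frac{1}{\mu(B(x,r))}\int_{\Om\cap B(x,r)}|u|\,d\mu\le\vint{B(x,r)}|\tilde u|\,d\mu\to 0$ for $\capa_p$-almost every $x\in\partial\Om$.

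For the ``if'' direction --- the substantive one --- I would start from the boundary Lebesgue-point hypothesis and aim to realize the zero extension $\tilde u$ as an element of $N^{1,p}(X)$, that is, to construct a $p$-integrable upper gradient of $\tilde u$ controlling its behavior across $\partial\Om$. The hypothesis forces the superlevel sets $\{|u|>t\}$ to occupy a negligible fraction of small balls centered at $\capa_p$-almost every boundary point, and I would feed this into a boxing-type covering argument near $\partial\Om$ --- which is precisely where the strong relative isoperimetric inequality enters --- gluing local cutoff and upper-gradient data into a global upper gradient of $\tilde u$ with controlled $L^p$ norm; quasicontinuity then identifies $\tilde u$ with a function in $N^{1,p}(X)$ vanishing on $X\setminus\Om$, i.e.\ $u\in N_0^{1,p}(\Om)$.

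The main obstacle, were one to reprove the result from scratch instead of citing \cite{KKST}, is exactly this last covering argument in the endpoint case $p=1$, which rests on the boxing inequality; its validity under our standing assumptions is what the strong relative isoperimetric inequality --- now available via Federer's characterization --- supplies. Once that inequality is in hand, nothing further is required.
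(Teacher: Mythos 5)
Your proposal matches the paper exactly: the paper offers no independent proof of this theorem, but simply observes that it is Theorem 1.1 of \cite{KKST}, whose proof there used the standing assumptions plus the strong relative isoperimetric inequality, and that this inequality is now available as a corollary of Federer's characterization. Your additional sketch of the two directions of the \cite{KKST} argument is consistent with that reasoning and does not change the logic, so the proposal is correct and takes essentially the same route as the paper.
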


Theorem 6.1 in  \cite{LaSh2} considered an analogous
characterization of a class of $\BV$
functions with zero boundary values, also under the additional
assumption of a strong relative isoperimetric inequality.
Such a class is needed in an ongoing study of new fine properties of BV functions and capacities (begun in \cite{L-ZB}), and this was in fact a key
motivation for the current paper.
The strong relative isoperimetric inequality was also
used in proving approximation results for $\BV$ functions, see
\cite[Corollary 6.7, Theorem 6.9]{LaSh2} as well as \cite[Corollary 7.6]{LaSh} and the comment after it.
Now we know that all of these results hold in every complete
metric space equipped with a doubling measure and
supporting a Poincar\'e inequality.

\noindent Address:\\

\noindent University of Jyvaskyla\\
Department of Mathematics and Statistics\\
P.O. Box 35, FI-40014 University of Jyvaskyla\\
E-mail: {\tt panu.k.lahti@jyu.fi}


\begin{thebibliography}{ACMM}

\bibitem{AH}D. Adams and L. I. Hedberg,
\textit{Function spaces and potential theory},
Grundlehren der Mathematischen Wissenschaften, 314. Springer-Verlag, Berlin,
1996. xii+366 pp.

\bibitem{A1}L. Ambrosio,
\textit{Fine properties of sets of finite perimeter in doubling metric measure spaces},
Calculus of variations, nonsmooth analysis and related topics.
Set-Valued Anal. 10 (2002), no. 2-3, 111--128.

\bibitem{AFP}L. Ambrosio, N. Fusco, and D. Pallara,
\textit{Functions of bounded variation and free discontinuity problems.}
Oxford Mathematical Monographs. The Clarendon Press, Oxford University Press, New York, 2000.

\bibitem{AMP}L. Ambrosio, M. Miranda, Jr., and D. Pallara,
\textit{Special functions of bounded variation in doubling metric measure spaces},
Calculus of variations: topics from the mathematical heritage of E. De Giorgi, 1--45,
Quad. Mat., 14, Dept. Math., Seconda Univ. Napoli, Caserta, 2004.

\bibitem{BB}A. Bj\"orn and J. Bj\"orn,
\textit{Nonlinear potential theory on metric spaces},
EMS Tracts in Mathematics, 17. European Mathematical Society (EMS), Z\"urich, 2011. xii+403 pp.

\bibitem{BB-OD}A. Bj\"orn and J. Bj\"orn,
\textit{Obstacle and Dirichlet problems on arbitrary nonopen sets in metric spaces, and fine topology},
Rev. Mat. Iberoam. 31 (2015), no. 1, 161--214.

\bibitem{BBL-SS}A. Bj\"orn, J. Bj\"orn, and V. Latvala,
\textit{Sobolev spaces, fine gradients and quasicontinuity on quasiopen sets},
Ann. Acad. Sci. Fenn. Math. 41 (2016), no. 2, 551--560.

\bibitem{BBL-CCK}A. Bj\"orn, J. Bj\"orn, and V. Latvala,
\textit{The Cartan, Choquet and Kellogg properties for the fine topology on metric spaces},
to appear in J. Anal. Math.

\bibitem{BBL-WC}A. Bj\"orn, J. Bj\"orn, and V. Latvala,
\textit{The weak Cartan property for the p-fine topology on metric spaces},
Indiana Univ. Math. J. 64 (2015), no. 3, 915--941.

\bibitem{EvGa}L. C. Evans and R. F. Gariepy,
\textit{Measure theory and fine properties of functions},
Studies in Advanced Mathematics series, CRC Press, Boca Raton, 1992.

\bibitem{Fed}H. Federer,
\textit{Geometric measure theory},
Die Grundlehren der mathematischen Wissenschaften, Band 153 Springer-Verlag New York Inc., New York 1969 xiv+676 pp. 

\bibitem{Giu84}E. Giusti,
\textit{Minimal surfaces and functions of bounded variation},
Monographs in Mathematics, 80. Birkh\"auser Verlag, Basel, 1984. xii+240 pp.

\bibitem{HKM}J. Heinonen, T. Kilpel\"ainen, and O. Martio,
\textit{Nonlinear potential theory of degenerate elliptic equations},
Unabridged republication of the 1993 original. Dover Publications, Inc., Mineola, NY, 2006. xii+404 pp.

\bibitem{HK}J. Heinonen and P. Koskela,
\textit{Quasiconformal maps in metric spaces with controlled geometry},
Acta Math. 181 (1998), no. 1, 1--61.

\bibitem{Kel}J. Kelly,
\textit{Quasiconformal mappings and sets of finite perimeter},
Trans. Amer. Math. Soc. 180 (1973), 367--387. 

\bibitem{KKST}J. Kinnunen, R. Korte, N. Shanmugalingam, and H. Tuominen,
\textit{A characterization of Newtonian functions with zero boundary values},
Calc. Var. Partial Differential Equations 43 (2012), no. 3-4, 507--528.

\bibitem{KKST-P}J. Kinnunen, R. Korte, N. Shanmugalingam, and H. Tuominen,
\textit{Pointwise properties of functions of bounded variation in metric spaces},
Rev. Mat. Complut. 27 (2014), no. 1, 41--67.

\bibitem{KoLa}R. Korte and P. Lahti,
\textit{Relative isoperimetric inequalities and sufficient conditions for finite perimeter on metric spaces},
Ann. Inst. H. Poincar\'e Anal. Non Lin\'eaire 31 (2014), no. 1, 129--154.

\bibitem{KoLaSh}R. Korte, P. Lahti, and N. Shanmugalingam,
\textit{Semmes family of curves and a characterization of functions of bounded variation in terms of curves},
Calc. Var. Partial Differential Equations 54 (2015), no. 2, 1393--1424. 

\bibitem{KoMaSh}R. Korte, N. Marola, and N. Shanmugalingam,
\textit{Quasiconformality, homeomorphisms between metric measure spaces preserving quasiminimizers, and uniform density property},
Ark. Mat. 50 (2012), no. 1, 111--134. 

\bibitem{L-Fed}P. Lahti,
\textit{A Federer-style characterization of sets of finite perimeter on metric spaces},
Calc. Var. Partial Differential Equations 56 (2017), no. 5, Art. 150, 22 pp.

\bibitem{L-FC}P. Lahti,
\textit{A notion of fine continuity for BV functions on metric spaces},
Potential Anal. 46 (2017), no. 2, 279--294.

\bibitem{L-WC}P. Lahti,
\textit{Superminimizers and a weak Cartan property for $p=1$ in metric spaces},
to appear in J. Anal. Math.

\bibitem{L-ZB}P. Lahti,
\textit{The variational 1-capacity and BV functions with zero boundary values on metric spaces},
preprint 2017.
https://arxiv.org/abs/1708.09318

\bibitem{LaSh}P. Lahti and N. Shanmugalingam,
\textit{Fine properties and a notion of quasicontinuity for BV
	functions on metric spaces},
 J. Math. Pures Appl. (9) 107 (2017), no. 2, 150--182.

\bibitem{LaSh2}P. Lahti and N. Shanmugalingam,
\textit{Trace theorems for functions of bounded variation in metric spaces},
to appear in Journal of Functional Analysis.

\bibitem{LMZ}J. Luke\v{s}, J. Mal\'y, and L. Zaj\'i\v{c}ek,
\textit{Fine topology methods in real analysis and potential theory},
Lecture Notes in Mathematics, 1189. Springer-Verlag, Berlin, 1986. x+472 pp.

\bibitem{MZ}J. Mal\'{y} and W. Ziemer,
\textit{Fine regularity of solutions of elliptic partial differential equations},
Mathematical Surveys and Monographs, 51. American Mathematical Society, Providence, RI, 1997. xiv+291 pp.

\bibitem{M}M.~Miranda, Jr.,
\textit{Functions of bounded variation on ``good'' metric spaces},
J. Math. Pures Appl. (9) 82  (2003),  no. 8, 975--1004.

\bibitem{S}N. Shanmugalingam,
\textit{Newtonian spaces: An extension of Sobolev spaces to metric measure spaces},
Rev. Mat. Iberoamericana 16(2) (2000), 243--279.

\bibitem{Zie89}W. P. Ziemer,
\textit{Weakly differentiable functions. Sobolev spaces and functions of bounded variation},
Graduate Texts in Mathematics, 120. Springer-Verlag, New York, 1989. 




\end{thebibliography}
\end{document}